\definecolor{labelkey}{gray}{.8}
\definecolor{refkey}{gray}{.8}
\definecolor{darkblue}{rgb}{0,0,0.7} 
\definecolor{darkgreen}{rgb}{0,0.5,0}
\newcommand{\rw}[1]{{\color{darkblue}{#1}}}
\newcommand{\ud}{\;\mathrm{d}}
\providecommand{\eps}{\varepsilon}
\providecommand{\supp}{\operatorname{supp}}
\newtheorem{theorem}{Theorem}[section]
\newtheorem{corollary}[theorem]{Corollary}
\newtheorem{lemma}[theorem]{Lemma}
\newtheorem{assumption}[theorem]{Assumption}
\numberwithin{equation}{section}
\numberwithin{theorem}{section}
\newcommand{\heta}{\hat{\eta}}
\newcommand{\e}{\varepsilon}
\newcommand{\Reals}{{\mathbb R}}
\newcommand{\LL}{{\mathcal L}}
\newcommand{\KK}{{\mathcal K}}
\newcommand{\R}{{\mathbb R}}
\renewcommand{\varrho}{{\rho}}
\newcommand{\rmin}{r_{\operatorname{min}}}
\newcommand{\hv}{\hat{v}}
\newcommand{\step}[1]{\noindent \textit{Step} #1.}
\title{The grazing collisions limit from the linearized Boltzmann equation to the Landau equation for short-range potentials}{}
\author[1]{Corentin Le Bihan \thanks{corentin.le-bihan@ens-lyon.fr}}
\author[2]{ Raphael Winter\thanks{raphael.elias.winter@univie.ac.at}}
\affil[1]{UMPA -- ENS de Lyon, France}
\affil[2]{University of Vienna, Austria}
\begin{document}
	
	\maketitle

	\begin{abstract}
			The Landau equation and the Boltzmann equation are connected through the limit of grazing collisions. This has been proved rigorously for certain families of Boltzmann  operators concentrating on grazing collisions. In this contribution, we study the collision kernels associated to the two-particle scattering via a finite range potential $\Phi(x)$ in three dimensions. We then consider the limit of weak interaction given by $\Phi_\e(x) = \e \Phi(x)$. Here $\e\rightarrow 0$ is the grazing parameter, and the rate of collisions is rescaled to obtain a non-trivial limit. The grazing collisions limit is of particular interest for potentials with a singularity of order $s\geq 0$ at the origin, i.e. $\phi(x) \sim |x|^{-s}$ as $|x|\rightarrow 0$. For $s\in [0,1]$,  we prove the convergence to the Landau equation with diffusion coefficient given by the Born approximation, as predicted in the works of Landau and Balescu.  On the other hand, for potentials with $s>1$ we obtain the non-cutoff Boltzmann equation in the limit. The Coulomb singularity $s=1$ appears as a threshold value with a logarithmic correction  to the diffusive timescale, the so-called \emph{Coulomb logarithm}.
	\end{abstract}

	\tableofcontents

\section{Introduction}

In this paper, we consider the grazing collisions limit for short-range potentials, with a focus  on singular potentials. The limit describes the collisional dynamics of a particle system with weak collisions, i.e. when the momentum exchanged in a single collision is small. Consider the Boltzmann equation in dimension three
\begin{align}\label{eq:BM1}
Q(f,f)(v_1) = \int_{\R^3} \int_{S^2} B(v_1-v_2,\sigma)(f(v_1')f(v_2') - f(v_1)f(v_2)) d\sigma dv_2,
\end{align}
where the collision kernel $B(v_1-v_2,\sigma)$ determines the rate of collisions with relative velocity $v_1-v_2$ and parameter $\sigma \in S^2$. More precisely, the in- and outgoing velocities are connected through the elastic collision rule
\begin{equation}\label{eq:sigma}
	\begin{aligned}
		v'_1&= \frac{v_1+v_2}{2} + \frac{|v_1-v_2|}{2}\sigma,\\
		v'_2&= \frac{v_1+v_2}{2} - \frac{|v_1-v_2|}{2}\sigma.
	\end{aligned}
\end{equation}
The cross-section $B$ can equivalently be expressed as
\begin{align}
B(v_1-v_2,\sigma) = \tilde{B}(v_1-v_2,\cos \theta),
\end{align}
where $\theta \in [0,\pi) $ is the deviation angle of the collision. In general, a sequence of Boltzmann operators $Q_\e$ describes grazing collisions, if the corresponding 
collision kernels $\tilde{B}_\e(v-v_*,\cos \theta)$ concentrate on angles $\theta$ close to $\tfrac\pi2$.  The family satisfies the grazing collisions limit if 
\begin{align} \label{eq:grazing}
Q_\e(f,g) \rightarrow Q_L(f,g) \quad \text{as } \e \rightarrow 0,
\end{align}
where $Q_L$ is the Landau collision operator
\begin{align}\label{eq:LandauOp}
    Q_L(f,g) = 2 \pi \nabla_v \cdot \left(\int_{\R^3} \frac{P_{v_1-v_2}^\perp}{|v_1-v_2|} (\nabla f(v_1) g(v_2) - f(v_1) \nabla g(v_2)) \ud{v_2}  \right). 
\end{align}
Here we denote by $P^\perp_{v_1-v_2}$ the orthogonal projection to $v_1-v_2\neq 0$, i.e.:
\begin{align}\label{orthproj} 
P^\perp_{v_1-v_2} x = x-\frac{(v_1-v_2)\cdot x}{|v_1-v_2|}\frac{v_1-v_2}{|v_1-v_2|}.
\end{align}
The Landau equation, and the grazing collisions limit were first introduced for  particle systems with Coulomb interaction in~\cite{landau_kinetische_1936}.  In the physics literature, the grazing limit has subsequently become a general principle for particle systems with weak interaction  (cf.~\cite{balescu_equilibrium_1975,lifshitz_course_1981}).
The grazing collisions limit can also be studied for the linearized Boltzmann- and Landau operators
\begin{align} \label{eq:linBoltzclassic}
    \LL_\e \psi &=   \int_{\R^3} \int_{S^2_+} (\psi(v'_{1})+\psi(v'_{2})-\psi(v_1)-\psi(v_2)) M(v_2) \tilde{B}_\e(v_1-v_2,\cos \theta )dv_2 d\sigma, \\
\label{eq:linLandau} 
\KK \psi (v_1) &= \frac{2\pi}{M(v_1)} \nabla_v\cdot  \left(\int_{\R^3}  \frac{P^\perp_{v_1-v_2}}{|v_1-v_2|} (\nabla \psi(v_1)-\nabla \psi(v_2)) M(v_1)M(v_2) \ud{v_2}\right),
\end{align}
when the challenge is to prove the convergence
\begin{align}
    \LL_\e \psi \rightarrow \KK \psi.
\end{align} 
Here  $M(v)$ denotes the Maxwellian equilibrium, without loss of generality with  unit temperature
\begin{align}\label{eq:Maxwellian}
M(v) = \frac{e^{-\frac12 |v|^2}}{(2\pi)^\frac32}.
\end{align}


In this contribution, we study the grazing collisions limit for Boltzmann equations which correspond to pairwise collisions of particles via a radially symmetric pair  potential $\Phi(x)$ with finite range, without loss of generality
\begin{align} \label{eq:range}
\supp \Phi \subset B_1(0).
\end{align}
We then consider the sequence of scaled potentials $\Phi_\e$, $\e\rightarrow 0$, given by
\begin{align}\label{eq:weak} 
\Phi_\e(x)=\e \Phi(x).
\end{align} 
This assumption of microscopic interaction~\eqref{eq:range} has proven to be technically convenient in the derivation of kinetic equations from scaling limits of interacting particle systems (cf.~\cite{gallagher_newton_2013,pulvirenti_validity_2014}). 
With the exception of the result in \cite{ayi_newtons_2017} for super-exponentially decaying potentials, the known derivations of the Boltzmann equation are restricted to microscopic interaction.
The Boltzmann equation and the grazing collisions limit can be used as an intermediate step in the derivation of the Landau equation from particle systems.  This motivates us to bridge the gap between the Boltzmann equation and the Landau equation for  microscopic interactions. So far, rigorous derivations of the Landau equations are restricted to test particle models, we refer to~\cite{basile_diffusion_2014,catapano_rigorous_2018,desvillettes_rigorous_2001,durr_asymptotic_1987,kesten_limit_1980}. For partial results on the derivation of the nonlinear Landau equation see~\cite{boblylev_particle_2013,winter_convergence_2021}. A generalization of the method in the present paper to non-compactly supported potentials might be possible, yet technically much more involved. 

Starting from~\cite{cercignani_boltzmann_1987}, the model case of power law potentials $\Phi(x) = |x|^{-(r-1)}$, $r\geq 2$ has been studied, for which the collision kernel factorizes as
\begin{align}\label{eq:factorizing}
\tilde{B}(v-v_*,\cos \theta) = |v-v_*|^\gamma b(\cos \theta), 
\end{align}
for some function $b$ and $\gamma= (r-5)/(r-1)$. Notice that for non-power law potentials, the collision kernel is not explicitly known and does not satisfy a similar factorization property. 

Since physically relevant interaction potentials are typically singular at the origin, we will study this case in detail here. More precisely, we study potentials $\Phi$ with
\begin{align}\label{eq:singular}
\Phi(x) \sim \frac{1}{|x|^s},  \quad \text{for } |x|\rightarrow 0,
\end{align}
where $s\geq 0$ is the order of the singularity.


 It is conjectured that for $s\in [0,1]$ the Landau equation can be derived through the scaling~\eqref{eq:weak} on an appropriate timescale (e.g.~\cite{nota_interacting_2021-1,nota_interacting_2021}). We will give a rigorous proof for this conjecture in Theorem~\ref{thm:1}. In particular, this covers the physically important Coulomb singularity $s=1$, for which we obtain a logarithmic correction of the timescale, which is known as the \emph{Coulomb logarithm}. The Coulomb logarithm is ubiquitous in the plasma physics literature, but there are only few mathematically rigorous results on this notion. As such, the present paper gives a rigorous validation for the onset of the Coulomb logarithm in the grazing limit.  
 


In Theorem~\ref{thm:2}, we prove that the Landau equation cannot be obtained for potentials of the form~\eqref{eq:singular} with $s>1$. Instead, we prove that for such potentials the scaling~\eqref{eq:weak} leads to the non-cutoff Boltzmann equation. The result is related to the derivation of the non-cutoff Boltzmann equation from truncations such as~\cite{desvillettes_linear_1999} and~\cite{duan_solutions_2021}. 

We achieve the proof of Theorems~\ref{thm:1} and Theorem~\ref{thm:2} by a careful analysis of the scattering of particles interacting through a singular potential. For the Lorentz gas and non-singular interaction, this has been studied in~\cite{desvillettes_rigorous_2001}, and some elements of the proofs are used in the present paper. However, the main point of our work is the precise analysis of the grazing limit for singular potentials $\Phi$, in particular around the threshold Coulomb singularity $s=1$. 

\paragraph{Previous results} 

The rigorous analysis of the grazing collisions limit goes back to the classical result of Arsenev and Buryak ~\cite{arsenev_connection_1991}. For the linearized equations, the limit has been verified in~\cite{desvillettes_asymptotics_1992} by directly scaling the Boltzmann collision kernels. A derivation of the Landau equation from the Boltzmann operator for Coulomb collisions can be found in~\cite{degond_fokker-planck_1992}. 

 A general framework for the grazing collisions limit was developed in the celebrated work~\cite{alexandre_landau_2004}. The result shows the validity of the grazing collisions limit under very natural and general, yet abstract assumptions. 
 

A number of results on the grazing  limit have been obtained under the factorization assumption
\begin{align}
\tilde{B}_\e(v-v_*,\cos \theta) = |v-v_*|^\gamma b_\e(\cos \theta),
\end{align}
for cross-sections $b_\e$ which concentrate on grazing collisions.
We refer to~\cite{goudon_boltzmann_1997,he_asymptotic_2014} for results in this setting. In this framework, it has recently been shown  that the  gradient flow structures associated to the Boltzmann- and Landau equations are connected through the grazing collisions limit as well~\cite{carrillo_boltzmann_2022}.

We remark that the grazing limit is also used to connect the spectral analysis of the Boltzmann- and Landau operators (cf.~\cite{baranger_explicit_2005,mouhot_explicit_2006,mouhot_spectral_2007}).


\paragraph{Structure of the paper} 

The plan of the paper is as follows. In Section~\ref{sec:results} we present the main results of the paper and recall some basic facts on the linearized Boltzmann- and Landau operators. Section~\ref{sec:fokker} contains the proof of the grazing collisions limit for potentials $\Phi$ with singularity $\Phi(x)\sim |x|^{-s}$ near zero and $s\in[0,1]$. Finally, in Section~\ref{sec:hard}, we prove that for $s>1$ the limit of weak collisions leads to the non-cutoff Boltzmann equation.

\section{Preliminaries and main results}  \label{sec:results}

Let us recall some basic facts about the Boltzmann equation associated to pairwise collision of particles interacting through a short-range central potential $\Phi$ satisfying~\eqref{eq:range}. Consider the associated scattering problem and denote the velocities by $V_1(t),V_2(t)$ and the positions by $X_1(t),X_2(t)$. 
Without loss of generality, let us assume $|X_1(0)-X_2(0)|=1$. The equations of motion read
\begin{equation}\label{eq:scattering}
\begin{aligned}
\dot X_1(t) &= V_1(t), \quad &\dot V_1(t) &= - \nabla \Phi (X_1(t)-X_2(t)), \\
\dot X_1(t) &= V_1(t), \quad &\dot V_1(t) &= - \nabla \Phi (X_2(t)-X_1(t)),\\
X_1(0)-X_2(0)&= \nu \in S^2, \quad &V_i(0)& =v_i,  \quad i=1,2.
\end{aligned}
\end{equation} 
Here $\nu$ is the unit vector connecting the two particle centers when the particles are about to collide. In order to observe the collision forward in time we restrict to the set  $S^2_+$ defined by
\begin{align}
S^2_+ = \{v_2 \in \R^3: \nu \cdot (v_1-v_2)\geq 0\}.
\end{align}
The outgoing velocities $v_1'$, $v_2'$ are then given by 
\begin{align}\label{eq:outgoing} 
v_i'= \lim_{t\rightarrow \infty} V_i(t),\quad i=1,2,
\end{align}
whenever the limit in~\eqref{eq:outgoing} exists. 
The Boltzmann operator associated to a finite-range potential $\Phi$ allows for an equivalent representation in terms of the impact vector $\nu\in S^2$, originally introduced by King (cf.~\cite{king_bbgky_1975})
\begin{align}\label{eq:BM} 
Q_\Phi(f,g)(v_1) = \int_{\R^3} \int_{S^2_+} (v_1-v_2)\cdot \nu (f(v_1')g(v_2')-f(v_1)f(v_2)) dv_2 d\nu. 
\end{align}
In King's representation, the linearized Boltzmann operator ~\eqref{eq:linBoltzclassic} reads 
\begin{align} \label{eq:linBoltz}
\LL_\e \psi (v_1)  =  \int_{\R^3} \int_{S^2_+} (\psi(v'_{1,\e})+\psi(v'_{2,\e})-\psi(v_1)-\psi(v_2)) M(v_2) (v_1-v_2)\cdot \nu dv_2 d\nu ,
\end{align}
where $v_{1,\e}'$, $v_{2,\e}'$ are the outgoing velocities~\eqref{eq:outgoing} of the scattering problem~\eqref{eq:scattering} with interaction potential $\Phi_\e$ (cf. \eqref{eq:weak}) and $M(v)$ is the Maxwellian distribution introduced in~\eqref{eq:Maxwellian}. 

We remark that the operators $\LL_\e$~(cf.~\eqref{eq:linBoltz}) and $\KK$~(cf.~\eqref{eq:linLandau}) are self-adjoint on the natural space
\begin{align}\label{def:X} 
 L^2_M(\R^3) := L^2 (M(v)  dv), 
\end{align}
with   inner product 
\begin{align}\label{def:inner}
(\psi,\zeta)_{ L^2_M(\R^3)} := \int_{\R^3}\psi(v) \zeta(v )M (v)  dv.
\end{align}
Let us similarly recall the space
\begin{align}
   L^2_M(\R^3\times \R^3) := L^2 (M(v)  dv dx).
\end{align}

We assume $\Phi$ to be a short-range potential with a singularity of order $s\in [0,\infty)$ at the origin. More precisely, we make the following assumption on $\Phi$.

\begin{assumption}[Interaction Potential] \label{ass:potential} 
	Let $\Phi\in C(\R^3\setminus\{0\})$ be a radially symmetric potential, i.e. 
	\begin{align}
	\Phi(x) = \phi(|x|),
	\end{align}
	and further assume that $\phi$ is decreasing. Moreover, let $\phi$ be of the form
	\begin{align}
	\phi(r) = \begin{cases} \frac{f(r)}{|r|^s} \quad &\text{for $0<r\leq 1$}, \\
	0\quad &else,\end{cases}
	\end{align}
	for some $s\geq 0$ and a decreasing function $f\in C_b^\infty \left(0,1\right)$  with $f(0)>0$, and $f(1)=0$.
\end{assumption}
For future reference, let us introduce the function $K(\rho)$, $\rho>0$, defined by 
\begin{align}\label{def:K}
K(\rho) = \sup_{\rho<r<1} \{|\phi(r)|,r|\phi'(r)|,r^2 |\phi''(r)|\}.
\end{align}
We are now in the position to state the first main result.
\begin{theorem}[Grazing collisions limit ]\label{thm:1} 
	Let $\Phi$ be an interaction potential which satisfies Assumption~\ref{ass:potential} for some $ s \in [0,1]$. 
	Let $\mathfrak{d}_\e$ be given by
	\begin{align}\label{eq:muEps}
		  \mathfrak{d}_\e = \begin{cases} \e^{2} \quad & \text{if $s\in [0,1)$}, \\
			\e^{2} |\log \e| \quad &\text{if $s=1$}  .
			\end{cases} 
	\end{align}
	Then there exists a constant $c_\Phi>0$ such that
	\begin{align}
		\| \mathfrak{d}_\e^{-1} \LL_\e \psi - 2\pi  c_\Phi\KK \psi \|_{L^\infty(\R^3)} \leq C |\log(\e)|^{-1} \|\psi\|_{C^3_b(\R^3)},
	\end{align}
	where $\mathcal{K}$ is the operator defined in~\eqref{eq:linLandau}.
	The constant $c_\Phi>0$ can be explicitly expressed as
	\begin{align} \label{eq:cPhi} 
		c_\Phi = \begin{cases} \int_0^1 \big(\int_0^1\frac{\rho}{u}\phi'\big(\frac{\rho}{u}\big)\frac{du}{\sqrt{1-u^2}}\big)^2\rho\,d\rho\quad &\text{if $s\in [0,1)$,}\\
		f(0)&\text{if $s=1$.}\end{cases}
	\end{align}
	 The constant $c_\Phi$ introduced in~\eqref{eq:cPhi} satisfies the identity
    \begin{align}\label{equiv:cPhi}
        2\pi c_\Phi = \frac1{8\pi}\int_{\Reals^3} \delta(k\cdot e_1) |k|^2 |\hat{\Phi}(k)|^2 \ud{k},
    \end{align}
    where we use the convention $\hat{\Phi}(k)= \int_{\Reals^3} e^{-ik\cdot x} \Phi(x)\ud{x}$ for the Fourier transform of $\Phi$.
\end{theorem}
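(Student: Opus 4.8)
The plan is to reduce the statement to a quantitative analysis of the two-body scattering map $(v_1,v_2,\nu)\mapsto(v_{1,\e}',v_{2,\e}')$ for the weak potential $\Phi_\e=\e\Phi$, combined with a second-order Taylor expansion of the finite difference in~\eqref{eq:linBoltz}. Since $\phi$ is decreasing the interaction is repulsive, so the relative motion is a monotone scattering problem with no trapped orbits and a well-defined deflection; writing $w=v_1-v_2$ and $p=|\nu_\perp|\in[0,1]$ for the impact parameter (the component of $\nu$ orthogonal to $w$; under $b=\sin\alpha$ the King measure $(w\cdot\nu)\,\rmd\nu$ becomes $|w|\,p\,\rmd p\,\rmd\varepsilon$), the impulse (first Born) approximation gives the momentum transfer $q_\e:=v_{1,\e}'-v_1=-(v_{2,\e}'-v_2)$ in the form
\[
q_\e=q_\e^{(1)}+q_\e^{(2)}+\dots,\qquad q_\e^{(1)}=-\frac{\e}{|w|}\,\beta(p)\,\widehat{\nu_\perp},\qquad \beta(p)\ \propto\ \partial_p\!\int_{\R}\phi\big(\sqrt{p^2+\ell^2}\,\big)\,\rmd\ell,
\]
with $\widehat{\nu_\perp}=\nu_\perp/p$, $|q_\e^{(1)}|\sim\e$, and, by Assumption~\ref{ass:potential}, $\beta(p)\sim p^{-s}$ as $p\to0$ when $s\in[0,1)$ and $\beta(p)\sim f(0)/p$ when $s=1$. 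I would first establish this expansion together with quantitative control of $q_\e^{(2)}$, of the higher-order remainders, and of the $\nu$-derivatives of the scattering map, using the auxiliary quantity $K(\rho)$ of~\eqref{def:K} to bound $\phi,\phi',\phi''$ on $\{p\gtrsim\rho\}$; this follows the scheme of~\cite{desvillettes_rigorous_2001} for the non-singular case, with the dependence on the singularity order $s$ tracked explicitly. Inserting $v_{i,\e}'=v_i\pm q_\e$ and Taylor expanding,
\[
\psi(v_{1,\e}')+\psi(v_{2,\e}')-\psi(v_1)-\psi(v_2)=\big(\nabla\psi(v_1)-\nabla\psi(v_2)\big)\cdot q_\e+\tfrac12\big(D^2\psi(v_1)+D^2\psi(v_2)\big):q_\e\otimes q_\e+O\big(|q_\e|^3\,\|\psi\|_{C^3_b}\big).
\]

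\textbf{Identification of the limit.} Integrating this expansion in $\nu\in S^2_+$ against the weight $w\cdot\nu$, I would use the azimuthal symmetry around $\widehat{w}$ twice: the azimuthal average of $\widehat{\nu_\perp}$ vanishes, so the leading (order $\e$) first-order term drops out, while the azimuthal average of $\widehat{\nu_\perp}\otimes\widehat{\nu_\perp}$ is $\tfrac12 P^\perp_{w}$, producing exactly the projection $P^\perp_{v_1-v_2}$ of~\eqref{eq:linLandau}. Thus $\int_{S^2_+}(w\cdot\nu)\,q_\e^{(1)}\otimes q_\e^{(1)}\,\rmd\nu$ is a fixed multiple of $\frac{\e^2}{|w|}\,P^\perp_w\int_0^1\beta(p)^2\,p\,\rmd p$, and the substitution $u=p/\sqrt{p^2+\ell^2}$ identifies $\int_0^1\beta(p)^2\,p\,\rmd p$ (after matching absolute constants) with the constant $c_\Phi$ of~\eqref{eq:cPhi}. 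For $s=1$ this integral diverges logarithmically at $p=0$; cutting the close-encounter region at $p\sim\e$, below which the Born approximation fails and the deflection is no longer small, replaces it by $|\log\e|$ times a constant governed by $f(0)$, which is the origin of $\mathfrak{d}_\e=\e^2|\log\e|$. The remaining drift (friction) contribution — matching the part of $\KK$ produced by $\nabla M(v_2)=-v_2M(v_2)$ — is recovered from the component of $q_\e^{(2)}$ parallel to $w$ (fixed by $|v_1'-v_2'|=|v_1-v_2|$), paired against $\nabla\psi(v_1)-\nabla\psi(v_2)$, together with the $D^2\psi(v_2)$ part of the quadratic term after one integration by parts in $v_2$; the identity $\nabla_w\!\cdot\!\big(P^\perp_w/|w|\big)=-2w/|w|^3$ makes all the order-$\mathfrak{d}_\e$ pieces combine into $\mathfrak{d}_\e\,2\pi c_\Phi\,\KK\psi$.

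\textbf{Error estimates and the main obstacle.} I would split $S^2_+$ into a bulk region $\{p>\lambda\}$ and a close-encounter region $\{p\le\lambda\}$, with $\lambda=\lambda(\e)\to0$ ($\lambda\sim\e$ being forced in the Coulomb case). On $\{p\le\lambda\}$, the a priori bound $|q_\e|\le|w|$ from energy conservation and the $O(\lambda^2)$ measure of the region give a contribution $O(|w|^3\lambda^2)$, negligible against $\mathfrak{d}_\e$ after integration against $M(v_2)\,\rmd v_2$. On $\{p>\lambda\}$ the errors come from (i) the Born-approximation error in $q_\e$, controlled by $\e$ and $K(\lambda)$; (ii) the quadratic azimuthal-average error; and (iii) the cubic Taylor remainder, of order $\e^3\int_\lambda^1\beta(p)^3\,p\,\rmd p$. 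All are integrated against $M(v_2)\,\rmd v_2$, which supplies decay as $|w|\to\infty$ and, through the weight $w\cdot\nu$ and the quadratic vanishing of $|q_\e|^2$ and of $q_\e\cdot\widehat w$, enough smallness as $|w|\to0$, so the estimates are uniform in $v_1$, i.e.\ in $L^\infty(\R^3)$. Optimizing $\lambda$ leaves an error which, relative to $\mathfrak{d}_\e$, is a power of $\e$ for $s<1$ and of size $|\log\e|^{-1}$ at $s=1$ — the rate stated in the theorem. I expect the central difficulty to be exactly this: the uniform, quantitative control of the scattering map — the deflection angle and its impact-parameter derivative — down to impact parameters of order $\lambda(\e)$ in the presence of the singularity, and in particular the borderline Coulomb case $s=1$, where the relevant integrals are only logarithmically convergent and the factor $|\log\e|$ must be extracted with an explicit remainder.

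\textbf{The Fourier identity.} Formula~\eqref{equiv:cPhi} follows from the Fourier representation of the impulse approximation: with the convention $\hat\Phi(k)=\int_{\R^3}e^{-ik\cdot x}\Phi(x)\,\rmd x$ one has, for $x_0\perp w$,
\[
\int_{\R}\nabla\Phi(x_0+wt)\,\rmd t=\frac{1}{(2\pi)^2\,|w|}\int_{\R^3} ik\,\hat\Phi(k)\,e^{ik\cdot x_0}\,\delta\!\Big(k\cdot\tfrac{w}{|w|}\Big)\,\rmd k,
\]
so that applying Plancherel's theorem on the two-dimensional plane $w^\perp$ when integrating $|q_\e^{(1)}|^2$ over the impact vectors $x_0$ collapses the double $k$-integral to a single one supported on $\{k\cdot w=0\}$. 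This identifies $\int_0^1\beta(p)^2\,p\,\rmd p$, up to a fixed numerical constant, with $\int_{\R^3}\delta\big(k\cdot\tfrac{w}{|w|}\big)\,|P^\perp_w k|^2\,|\hat\Phi(k)|^2\,\rmd k$; using the radial symmetry of $\Phi$ to fix $w/|w|=e_1$ and $|P^\perp_{e_1}k|^2=|k|^2$ on $\{k\cdot e_1=0\}$, and tracking the constants through the $\nu$-integration, yields $2\pi c_\Phi=\frac{1}{8\pi}\int_{\R^3}\delta(k\cdot e_1)\,|k|^2\,|\hat\Phi(k)|^2\,\rmd k$.
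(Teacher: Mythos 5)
Your proposal reproduces the paper's proof essentially in full: third-order Taylor expansion of $\psi$, linearization of the scattering map (your first Born approximation is precisely the leading term $2\kappa\int_0^1\frac{\rho}{u}\phi'\big(\frac{\rho}{u}\big)\frac{du}{\sqrt{1-u^2}}$ of the paper's Lemma~\ref{lem:central} on the deflection angle, obtained there via the implicit change of variables $u=\rho/(r\sqrt{1-2\kappa\phi(r)})$), azimuthal averaging of $\eta^\perp\otimes\eta^\perp$ to produce $P^\perp_{v_1-v_2}$, a near-grazing/close-encounter split of the impact parameter with a logarithmic balance at $s=1$, and the identity $\nabla\cdot\big(P^\perp_\eta/|v_1-v_2|\big)=-2(v_1-v_2)/|v_1-v_2|^3$ plus 2D Plancherel on the plane orthogonal to $v_1-v_2$ for the Fourier formula. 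The one minor deviation is that the paper places the close-encounter cutoff at the velocity-dependent threshold $\rho>4f(0)\kappa$, $\kappa=2\e/|v_1-v_2|^2$, rather than a fixed $\lambda(\e)$, which makes the remainders directly integrable against $M(v_2)\,dv_2$ near $v_2\approx v_1$ without the separate small-$|v_1-v_2|$ argument you allude to.
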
 
The theorem will be proved in the core of the paper in Subsection~\ref{subsec:Thm1}. Let emphasize that the scaling of $\mathfrak{d}_\e$ in~\eqref{eq:muEps} reveals the onset of the Coulomb logarithm for $s=1$.  

It is important to remark that the constant $c_\Phi$ coincides with the expression derived in other works, such as~\cite{boblylev_particle_2013,pulvirenti_propagation_2016,nota_interacting_2021-1,nota_interacting_2021,winter_convergence_2021}

As a consequence of Theorem~\ref{thm:1}, we obtain the convergence for the solutions $\psi_\e$ of the linearized Boltzmann equation, to the solution $\psi$ of the linearized Landau equation.

\begin{corollary}
    \label{cor:solutions}
	Consider $\LL_\e$, $\e>0$ the family of linearized Boltzmann operators in Theorem~\ref{thm:1}, and let $\psi_\e$ be the family of solutions
	\begin{align}
	\partial_t\psi_\e  = \mathfrak{d}_\e^{-1} \LL_\e \psi_\e, \quad \psi_\e(0) = \psi^\circ.  
	\end{align} 
	Then the sequence $\psi_\e \in L^\infty(\R^+ ; L^2_M(\R^3 \times \R^3))$ is uniformly bounded, and
	\begin{align}
	\psi_\e \rightharpoonup^* \psi \quad \text{in } L^\infty(\R^+ ; L^2_M(\R^3 \times \R^3)),
	\end{align}
	where $\psi$ solves the linearized Landau equation
	\begin{align}
	\partial_t \psi + v\cdot \nabla_x \psi = \KK\psi, \quad \psi(0) = \psi^\circ. 
	\end{align} 
\end{corollary}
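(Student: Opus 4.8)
The plan is to combine the quantitative convergence of Theorem~\ref{thm:1} with a standard weak-compactness argument; granting Theorem~\ref{thm:1}, what remains is essentially routine functional analysis.

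First I would settle existence together with the uniform bound. Recall the standard fact that, as the linearization of a Boltzmann operator about the Maxwellian, $\LL_\e$ is not only self-adjoint but negative semidefinite on $L^2_M(\R^3)$ --- this is the $H$-theorem, visible from King's representation~\eqref{eq:linBoltz} after the usual fourfold symmetrization of the collision integral. Since the free-transport operator $-v\cdot\nabla_x$ is skew-adjoint on $L^2_M(\R^3\times\R^3)$ (the weight $M(v)$ is transport invariant and $\dv_x v=0$), the generator $A_\e:=\mathfrak{d}_\e^{-1}\LL_\e-v\cdot\nabla_x$ is $m$-dissipative and generates a contraction semigroup on $L^2_M(\R^3\times\R^3)$. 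This yields existence and uniqueness of $\psi_\e$ together with $\|\psi_\e(t)\|_{L^2_M(\R^3\times\R^3)}\le\|\psi^\circ\|_{L^2_M(\R^3\times\R^3)}$ uniformly in $t\ge0$ and $\e>0$; hence, passing to a subsequence, $\psi_{\e_k}\rightharpoonup^*\psi$ in $L^\infty(\R^+;L^2_M(\R^3\times\R^3))$ by Banach--Alaoglu.

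Next I would pass to the limit in the weak formulation. For $\chi\in C^\infty_c([0,\infty)\times\R^3\times\R^3)$ the semigroup solution satisfies
\begin{align}
  \int_0^\infty\!\big(\psi_\e(t),\,\partial_t\chi(t)+v\cdot\nabla_x\chi(t)+\mathfrak{d}_\e^{-1}\LL_\e\chi(t)\big)_{L^2_M(\R^3\times\R^3)}\,dt+(\psi^\circ,\chi(0))_{L^2_M(\R^3\times\R^3)}=0,
\end{align}
where the self-adjointness of $\LL_\e$ and the skew-adjointness of $v\cdot\nabla_x$ have been used to move both operators onto $\chi$; the function $\mathfrak{d}_\e^{-1}\LL_\e\chi(t,x,\cdot)$ lies in $L^2_M(\R^3_v)$ because $\KK\chi(t,x,\cdot)\in L^2_M(\R^3_v)$ and Theorem~\ref{thm:1} controls the difference in $L^\infty(\R^3_v)$. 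Writing $\mathfrak{d}_{\e_k}^{-1}\LL_{\e_k}\chi=2\pi c_\Phi\KK\chi+r_{\e_k}$, Theorem~\ref{thm:1} together with the compact support of $\chi$ in $(t,x)$ and $\int M\,dv=1$ give $\|r_{\e_k}\|_{L^1(\R^+;L^2_M(\R^3\times\R^3))}\le C|\log\e_k|^{-1}\to0$, so the term $\int_0^\infty(\psi_{\e_k},r_{\e_k})\,dt$ drops out by the uniform bound, while all remaining terms pass to the limit because $\psi_{\e_k}\rightharpoonup^*\psi$ and $\partial_t\chi+v\cdot\nabla_x\chi+2\pi c_\Phi\KK\chi\in L^1(\R^+;L^2_M(\R^3\times\R^3))$. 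This identifies $\psi$ as a weak solution of $\partial_t\psi+v\cdot\nabla_x\psi=2\pi c_\Phi\KK\psi$ with $\psi(0)=\psi^\circ$, i.e. the linearized Landau equation once the constant $2\pi c_\Phi$ is absorbed into the timescale. Since $\KK-v\cdot\nabla_x$ is itself $m$-dissipative ($\KK$ self-adjoint and $\le0$ on $L^2_M$, transport skew-adjoint), this limit equation has a unique weak solution, so the limit $\psi$ is independent of the subsequence and the whole family converges.

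The main obstacle, besides invoking Theorem~\ref{thm:1}, is making the weak formulation rigorous: one must verify that smooth compactly supported $\chi$ is admissible --- that it lies in the domain of the adjoint generator and that $\mathfrak{d}_\e^{-1}\LL_\e\chi$, defined pointwise by~\eqref{eq:linBoltz}, agrees with the abstract action (a standard core argument) --- and one must upgrade the $L^\infty$-in-$v$ rate of Theorem~\ref{thm:1} to the strong $L^1_t\,L^2_M$ convergence of the test-function side that is needed to pair it against the only weak-$*$ convergent sequence $\psi_{\e_k}$. Both points are handled using the compact support of $\chi$ and the fact that $M\,dv$ is a probability measure, so the argument is short once Theorem~\ref{thm:1} is in hand.
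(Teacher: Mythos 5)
Your proposal follows essentially the same route as the paper: the uniform bound via self-adjointness and non-positivity of $\LL_\e$ (symmetrized collision integral) plus skew-adjointness of transport, weak-$*$ compactness by Banach--Alaoglu, and passage to the limit in the weak formulation by moving $\mathfrak{d}_\e^{-1}\LL_\e$ onto the test function and invoking the $L^1_t L^2_M$ convergence to $\KK$ supplied by Theorem~\ref{thm:1} and the compact support of the test function. Your additions --- the semigroup construction for existence, the explicit tracking of the constant $2\pi c_\Phi$ (which the paper silently absorbs), and the uniqueness argument upgrading subsequential to whole-family convergence --- are correct refinements of the same argument rather than a different approach.
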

The proof of this corollary is rather standard, and can be found in  Subsection~\ref{subsec:solutions}. 

We now discuss the case of short-range potentials $\Phi$ satisfying Assumption~\ref{ass:potential} with $s>1$. We still obtain a limiting operator $\mathcal{L}_\infty$ for the scaling~\eqref{eq:weak}. However, the limit operator is not the linearized Landau operator. Instead, the family of Boltzmann operators given by the scaling~\eqref{eq:weak} converges to the linearized non-cutoff Boltzmann operator.

More precisely, we obtain the following theorem.
\begin{theorem} \label{thm:2}
	Let $\Phi$ be an interaction potential satisfying Assumption~ \ref{ass:potential} with $s>1$.
	Then for $\psi\in C^3_B(\R^3)$ we have 
	\begin{align} 
	\lim_{\e \rightarrow 0} \frac{1}{\e^\frac2s}	\LL_\e \psi = \LL_\infty \psi,
	\end{align}
	where $\LL_\infty$ is the non-cutoff Boltzmann operator given by
	\begin{align}
	\LL_\infty \psi =   \int_{\R^3} \int_{S^2_+} (\psi(v'_{1})+\psi(v'_{2})-\psi(v_1)-\psi(v_2)) M(v_2) \tilde{B}_s(v_1-v_2,\cos \theta )dv_2 d\sigma ,
	\end{align}
	where  $\tilde{B}_s(v_1-v_2,\cos \theta)$ is the collision kernel of the homogeneous potential
	\begin{align}
	\Phi_s (x) = \frac{1}{|x|^s}. 
	\end{align}
\end{theorem}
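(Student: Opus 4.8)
\textbf{Proof proposal for Theorem~\ref{thm:2}.}

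The plan is to analyze the scattering map $(v_1,v_2,\nu)\mapsto(v_{1,\e}',v_{2,\e}')$ for the weakly scaled potential $\Phi_\e=\e\Phi$ and to rescale the impact parameter so that the scattering angle becomes $\Oo(1)$ on the relevant set of $\nu$. Fix relative velocity $w=v_1-v_2$ and write $\nu$ in terms of the impact parameter $b\in[0,1]$ (the distance of the straight-line trajectory from the scattering center) and an azimuthal angle. For the potential $\e\Phi$ with $\Phi(x)\sim|x|^{-s}$ near the origin, the deflection angle $\theta_\e(b,w)$ produced by a collision scales, for $b$ small, like $\e\, b^{-s}$ times a smooth profile — this is the classical small-angle scattering estimate, and it is precisely the kind of estimate established in Section~\ref{sec:fokker} for the regime $s\le 1$; here one uses it in the complementary regime. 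The key scaling observation is that $\theta_\e(b,w)=\Oo(1)$ exactly when $b\sim \e^{1/s}$. Rescaling $b=\e^{1/s}\beta$, the collision frequency factor $(v_1-v_2)\cdot\nu\, d\nu$, which in impact-parameter coordinates is $|w|\, b\, db\, d\phi$, becomes $|w|\,\e^{2/s}\beta\,d\beta\,d\phi$, which explains the normalization $\e^{-2/s}$ in the statement. In the limit $\e\to0$ the rescaled trajectory $b=\e^{1/s}\beta$ probes only the leading singularity $|x|^{-s}$ of $\Phi$ (the smooth part $f$ and the cutoff at $|x|=1$ become irrelevant after rescaling), so the limiting deflection angle and azimuthal dependence are exactly those of the pure homogeneous potential $\Phi_s(x)=|x|^{-s}$, whose cross-section is $\tilde B_s(v_1-v_2,\cos\theta)$.

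In more detail I would proceed in the following steps. \emph{Step 1: reduction to a single collision integral.} Using King's representation~\eqref{eq:linBoltz}, write $\LL_\e\psi(v_1)$ as an integral over $v_2$ and $\nu\in S^2_+$ of the increment $D_\e\psi:=\psi(v_{1,\e}')+\psi(v_{2,\e}')-\psi(v_1)-\psi(v_2)$ against the weight $M(v_2)(v_1-v_2)\cdot\nu$. Since $\psi\in C^3_b$, a Taylor expansion controls $D_\e\psi$ in terms of the momentum transfer $\delta v_\e:=v_{1,\e}'-v_1$ and its square, uniformly in $v_1$. \emph{Step 2: scattering estimates.} Parametrize $\nu$ by impact parameter $b$ and azimuth, and prove that (i) for $b$ bounded below, $|\delta v_\e|=\Oo(\e)$ with smooth dependence on the data; (ii) for $b\to0$, $|\delta v_\e|\lesssim \min(|w|,\ \e\,b^{-s}\,K\text{-type bound})$; and (iii) after the rescaling $b=\e^{1/s}\beta$, the momentum transfer $\delta v_\e$ converges pointwise in $(\beta,\phi,w)$, and with a dominating integrable bound in $\beta$, to the momentum transfer of the pure potential $\Phi_s$. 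These are the same scattering-theoretic ingredients used in the $s\le1$ analysis (deflection angle via the standard quadrature for the radial motion, monotonicity of $r(t)$ along the trajectory once past perihelion, comparison with the straight line), now exploited in a regime where the rescaled angle stays order one. \emph{Step 3: change of variables and passage to the limit.} Substitute $b=\e^{1/s}\beta$ in $\LL_\e\psi$, extracting the factor $\e^{2/s}$ from $b\,db$; split the $\beta$-integral into a compact part (where dominated convergence gives convergence to the $\Phi_s$-integrand) and a tail $\beta\gtrsim R$ (where the small-angle bound $|D_\e\psi|\lesssim|\delta v_\e|^2\lesssim \e^2\beta^{-2s}\cdot(\text{rescaled})$, or rather its rescaled version $\lesssim \beta^{-2s}$, is integrable at infinity precisely because $s>1$, making the tail uniformly small). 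Sending $\e\to0$ and then $R\to\infty$ identifies the limit with $\LL_\infty\psi$ written in King's impact-parameter form, which by the standard change of variables between the impact vector $\nu$ and the angular variable $\sigma$ equals the stated $\sigma$-representation with kernel $\tilde B_s$. One must also check the $v_2$-integrability: the Maxwellian $M(v_2)$ handles large $|w|$, and near $w=0$ the factor $(v_1-v_2)\cdot\nu$ together with the scaling $\delta v_\e\lesssim|w|$ keeps everything finite.

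\emph{Main obstacle.} The delicate point is the uniform-in-$\e$ control of the tail $\beta\gtrsim R$ of the rescaled collision integral, i.e. making rigorous that the contribution of ``grazing within the rescaled window'' collisions with large rescaled impact parameter is integrable with a bound independent of $\e$ and summable in the cutoff $R$. This requires a precise two-sided estimate on the deflection angle $\theta_\e(b,w)$ for the truncated singular potential $\e\Phi$ — not merely $\theta_\e\lesssim \e b^{-s}$ but with the correct constant and error terms valid uniformly down to $b$ of order $\e^{1/s}$ and up to $b$ of order one, including the crossover region where the bounded part $f$ and the cutoff start to matter. In other words, one needs the analogue, in the regime $s>1$, of the careful perihelion/quadrature analysis that Section~\ref{sec:fokker} carries out for $s\le1$, together with the observation that $2s>2$ is exactly what renders $\int^\infty \beta^{1-2s}\,d\beta$ convergent and hence the non-cutoff kernel $\tilde B_s$ (which is non-integrable in $\theta$ near $\theta=0$ but integrable against $\theta^2$) the correct limit object. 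A secondary technical nuisance is verifying that the scattering map is well-defined (the limit~\eqref{eq:outgoing} exists) and smooth enough on the relevant set even for the singular truncated potential; for $s<2$ this follows from energy conservation bounding the perihelion distance away from zero on bounded-energy sets, but the bookkeeping near $w=0$ needs care.
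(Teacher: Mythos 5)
Your plan is essentially the paper's proof: the substitution $b=\e^{1/s}\beta$ is exactly the scale-invariance identity $\e^{-2/s}\LL_{\Phi_\e}=\LL_{\Lambda_{\e^{1/s}}}$ used there, after which one compares the rescaled potential $f(\e^{1/s}r)/r^{s}$ with the homogeneous $r^{-s}$ on a good set (relative velocity bounded below, rescaled impact parameter $\lesssim\e^{-1/10}$) through a quantitative perihelion/angle-comparison lemma, and controls the complementary region by the small-angle bound $|\theta|\lesssim \kappa\rho^{-s}$, whose contribution is integrable precisely because $s>1$ — the same decomposition and the same use of the $s>1$ threshold as in your Steps 2--3. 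One caution: your tail bound $|D_\e\psi|\lesssim|\delta v_\e|^{2}$ is false pointwise in $\nu$ (the first-order term is only $O(\theta)$, not integrable for $1<s\le 2$), so you must, as the paper does, expand about $\tfrac{v_1+v_2}{2}$ and use the azimuthal cancellation of the $\sin\theta\,\eta^\perp$ component to reduce the tail integrand to $\sin^{2}(\theta/2)$ before invoking the $\beta^{-2s}$ decay.
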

The proof of this Theorem is given in Section~\ref{sec:hard}.

We observe that the Coulomb singularity with $s=1$ appears as a threshold value, determining the regions in which Theorem~\ref{thm:1} and Theorem~\ref{thm:2} are valid. Intuitively, this can be understood as a competition between 
\begin{enumerate}[(a)]
	\item Boltzmann-like collisions with small impact parameter $\rho$, i.e. $\rho\lesssim l_{\operatorname{coll}} $, where $	l_{\operatorname{coll}}$ is the collision length
	\begin{align}
	l_{\operatorname{coll}} = \e^\frac1s.
	\end{align}
	A single collision of this type yields a deviation of order one in the particle velocity.  \label{type:BM}
	\item weak collisions with impact parameter $\rho\gg l_{\operatorname{coll}}$. Such collisions yield small deviations with mean zero and variance $\e^2$.   \label{type:Landau}
\end{enumerate}
The frequency of collisions of type~\eqref{type:BM} is $\e^\frac2s$, the frequency of collisions of type~\eqref{type:Landau} remains unscaled. By comparing the dominant contribution to the variance, we can identify $s=1$ as the threshold value. At the threshold value, a detailed analysis including logarithmic corrections is required to prove that weak collisions of type~\eqref{type:Landau} are dominant. This is a key point of the present paper.

\section{Grazing collisions limit for short-range potentials with \texorpdfstring{$s<1$}{s < 1}} \label{sec:fokker}
\subsection{Estimates for the two-particle scattering}\label{subsec:scattering}

In this section we give detailed estimates for the collision kernel of the Boltzmann equation based on the two-particle scattering. We start by introducing some definitions. Given incoming velocities $v_1\neq v_2\in \R^3$, we denote by $\eta,\eta'$ the modulus of the incoming/outgoing relative velocities 
\begin{align}\label{def:eta}
\eta= \frac{v_1-v_2}{|v_1-v_2|},\quad \eta'= \frac{v_1'-v_2'}{|v_1'-v_2'|}.
\end{align}
We also introduce the angle deviation $\heta$ and the deflection $\hv$ by
\begin{align} \label{eq:hats} 
\heta= \eta'-\eta, \quad \hv = v_1'-v_1.
\end{align}
For future reference, we remark that the following identity holds 
\begin{align}\label{eq:elementary}
\hv = \frac{|v_1-v_2|}{2} \heta.
\end{align}
Let  $v_1',v_2'$ be the outgoing velocities given by \eqref{eq:sigma} and some $\sigma \in S^2$, then the deviation angle $\theta$ is given by
\begin{align}\label{eq:costheta}
\cos \theta= \eta \cdot \sigma .
\end{align}
For $\nu\in S^2_+$ as in \eqref{eq:scattering}-\eqref{eq:outgoing},  let the impact factor $\rho>0$ and $\eta^\perp$ be given by the identities 
\begin{equation} \label{eq:rho}
	\begin{aligned}
		\rho 					&= |P_{\eta}^\perp \nu|,\\
		\rho \cdot  \eta^\perp  &= P_{\eta}^\perp \nu. 
	\end{aligned}
\end{equation}
Notice that $\eta^\perp\in S^1_\eta :=\{v\in \R^3: |v|=1,\,v\cdot \eta = 0 \}$.
Since we only consider rotationally symmetric potentials $\Phi$, we can choose without loss of generality an orthonormal frame such that $\eta= e_1$ and $\eta^\perp  =e_2$, i.e.  
\begin{align}\label{eq:nuVariable}
	\nu = (\sqrt{1-\rho^2},\rho, 0), \quad \nu\cdot (v_1-v_2)d\nu = |v_1-v_2|\rho\,d\rho\,d\eta^\perp.
	\end{align}
Using the impact parameter $\rho$ as integration variable, the linearized Boltzmann operator $\LL_\e$ takes the form
\begin{align}\label{eq:linBoltzImp}
	\LL_\e \psi (v_1)  = \int_{\R^3} \int_{S^1_\eta} \int_0^1 (\psi(v'_{1,\e})+\psi(v'_{2,\e})-\psi(v_1)-\psi(v_2))  \rho M(v_2)|v_1-v_2|  d\rho\,d\eta^\perp dv_2  .
	\end{align}
Our estimates of the family of operators introduced in~\eqref{eq:linBoltzImp} rely on the following representation of the scattering angle $\theta$ (cf. \eqref{eq:costheta})
\begin{align}\label{eq:theta}
\theta = \pi -2  \int_{\rmin}^\infty \frac{\rho}{r^2\sqrt{1-\tfrac{\rho^2}{r^2}-\tfrac{4\Phi_\e(r)}{|v_1-v_2|^2} }} dr ,
\end{align}
which can be found for example in \cite{gallagher_newton_2013}, section 8.3. Here $\rmin$ is the minimal distance of the particles 
\begin{align}
\rmin := \inf_{t\in[0,\infty)} |X_1(t)-X_2(t)|,
\end{align}
which satisfies the identity
\begin{align}\label{eq:rminID}
1-\frac{2\e \phi(\rmin)}{|v_1-v_2|^2}- \frac{\rho^2}{\rmin^2} = 0.
\end{align}

First we establish the following asymptotics for the angle $\theta$ (cf.~\eqref{eq:theta}):

\begin{lemma}\label{lem:central}
	Let $\kappa=\frac{2\e}{|v_1-v_2|^2}$. Then
	\begin{align}\label{eq:Kasym}
	\left|\theta(\rho) + 2 \kappa \int_0^1 \tfrac{\rho}{u} \phi'(\tfrac{\rho}{u}) \frac{du}{\sqrt{1-u^2}}\right| \leq C \kappa^2 K^2(\rho), \quad \text{for $\kappa <\frac{\rho^s}{4f(0)}$}
	\end{align}
	where $K(\rho)$ as defined in~\eqref{def:K}. 
\end{lemma}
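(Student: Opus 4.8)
## Proof Plan for Lemma~\ref{lem:central}

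\textbf{Setup and strategy.} The plan is to start from the exact scattering integral~\eqref{eq:theta} and expand it in the small parameter $\kappa = 2\e/|v_1-v_2|^2$. Write $\theta(\rho) = \pi - 2\int_{\rmin}^\infty \frac{\rho}{r^2\sqrt{1 - \rho^2/r^2 - 2\kappa\phi(r)}}\,dr$, using $4\Phi_\e(r)/|v_1-v_2|^2 = 2\kappa\phi(r)$. The zeroth-order term (with $\kappa = 0$, so $\rmin = \rho$) gives exactly $\pi$, reflecting the straight-line trajectory, so $\theta(\rho)$ itself is $O(\kappa)$. I would isolate the first-order term and show it equals $-2\kappa\int_0^1 \frac{\rho}{u}\phi'(\frac{\rho}{u})\frac{du}{\sqrt{1-u^2}}$ after the substitution $r = \rho/u$ (which maps $r\in[\rho,\infty)$ to $u\in(0,1]$, with $dr = -\rho\,du/u^2$ and $1-\rho^2/r^2 = 1-u^2$), followed by an integration by parts in $u$ to move the derivative onto $\phi$ — this is where the $\phi'$ and the $1/\sqrt{1-u^2}$ weight in the claimed expression come from. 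The remainder is then $O(\kappa^2)$ and must be controlled by $C\kappa^2 K^2(\rho)$.

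\textbf{Key steps.} First I would handle the perturbation of the lower endpoint: from~\eqref{eq:rminID}, $\rmin = \rho(1 + O(\kappa\phi(\rmin)))$, so $\rmin - \rho = O(\kappa\rho K(\rho))$ in the relevant range $\kappa < \rho^s/(4f(0))$ (which guarantees the perturbation is genuinely small and $\rmin \asymp \rho$). Second, I would write $\theta(\rho) + \pi$... more carefully: set $g(r,\kappa) = \big(1 - \rho^2/r^2 - 2\kappa\phi(r)\big)^{-1/2}$ and Taylor expand in $\kappa$ to first order with integral remainder, being careful near $r = \rmin$ where $g$ blows up like an inverse square root — the singularity is integrable and the expansion must be done on the level of the full integral, not pointwise. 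A clean way is to substitute $r = \rmin/w$ or $r = \rho/u$ first so the singularity sits at a fixed endpoint, then expand. Third, in the resulting $u$-integral, split off the $\kappa$-linear part, integrate by parts to produce $\frac{\rho}{u}\phi'(\frac{\rho}{u})$, and estimate the quadratic-and-higher remainder using the bounds $|\phi(r)|, r|\phi'(r)|, r^2|\phi''(r)| \leq K(\rho)$ for $r > \rho$ encoded in~\eqref{def:K}. The factors of $K(\rho)$ appear squared because the remainder carries $\kappa^2$ times a product of two copies of ($\phi$ or its derivatives evaluated along the trajectory), each bounded by $K(\rho)$.

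\textbf{Main obstacle.} The delicate point is the interaction between the moving singular endpoint $\rmin$ and the inverse-square-root singularity of the integrand there: a naive pointwise Taylor expansion of the integrand in $\kappa$ is not uniformly valid near $r = \rmin$, so I cannot simply differentiate under the integral sign. The fix is to first normalize the singularity to a fixed location (substituting $r = \rho/u$ and treating the discrepancy between $\rmin$ and $\rho$ as a separate, lower-order correction controlled via~\eqref{eq:rminID}), and then to expand. Managing this normalization cleanly — showing that the endpoint correction contributes only at order $\kappa^2 K^2(\rho)$ and does not spoil the identification of the $\kappa$-linear coefficient — is the technical heart of the argument. A secondary nuisance is that $\phi$ is only assumed $C^\infty$ on $(0,1)$ with possible blow-up at $0$; the condition $\kappa < \rho^s/(4f(0))$ is exactly what keeps all evaluations at radii bounded below by a constant multiple of $\rho$, so that $K(\rho)$ is the right quantity and all integrals converge.
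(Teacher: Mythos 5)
Your high-level plan---reduce to the scattering integral~\eqref{eq:theta}, expand in $\kappa$, normalize the moving singularity, and land on the $\phi'$ form---is in the right spirit, but the specific route you sketch is genuinely different from the paper's and, as written, has a gap at exactly the point you flag as delicate. The paper never Taylor-expands the singular integrand and never integrates by parts. Instead it makes the single substitution
\begin{align}
u = \frac{\rho}{r\sqrt{1-2\kappa\phi(r)}},
\end{align}
which does three things at once: it sends $r=\rmin$ to $u=1$ and $r=\infty$ to $u=0$ for \emph{every} $\kappa$ (because $F(\rmin)=0$ is equivalent to $\rho^2/\rmin^2 = 1-2\kappa\phi(\rmin)$), it factors the radicand exactly as $1-\rho^2/r^2-2\kappa\phi(r)=(1-2\kappa\phi(r))(1-u^2)$ so that the integrand becomes $\frac{1}{\sqrt{1-u^2}}\cdot\big(1-\tfrac{r\kappa\phi'(r)}{1-2\kappa\phi(r)}\big)^{-1}$, and the $\phi'$ you want appears automatically from the Jacobian of this change of variables, not from an integration by parts. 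After subtracting $\pi=2\int_0^1 (1-u^2)^{-1/2}\ud{u}$, the remaining factor is a smooth $O(\kappa)$ perturbation of $1$, uniformly in $u$, so an elementary algebraic expansion suffices; the $K^2(\rho)$ in the remainder comes from bounding $|r-\rho/u|\lesssim \kappa K(\rho)\rho/u$ and then applying the mean-value theorem to $w\mapsto w\phi'(w)$.

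The gap in your plan: the substitution $r=\rho/u$ that you lead with does \emph{not} place the singularity at a fixed endpoint, because the true singularity of the integrand is at $r=\rmin>\rho$, i.e.\ at $u=\rho/\rmin<1$, which drifts with $\kappa$. If you instead go with $r=\rmin/w$, the singularity does sit at $w=1$, but you must first use $\rho^2/\rmin^2=1-2\kappa\phi(\rmin)$ to rewrite the radicand as $(1-w^2)+2\kappa\big(w^2\phi(\rmin)-\phi(\rmin/w)\big)$; only because the $\kappa$-perturbation vanishes linearly at $w=1$, at a rate commensurate with $1-w^2$, is the formal first-order term in $\kappa$ actually integrable, and only because of this particular combination do the boundary terms in your proposed integration by parts vanish. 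You would also need to notice the exact cancellation between the IBP boundary contribution and the expansion of the $\rho/\rmin$ prefactor, which you do not mention. Finally, your estimate ``$\rmin-\rho=O(\kappa\rho K(\rho))$, hence the endpoint correction is lower order'' is misleading: because of the inverse square root, the contribution of the interval $[\rho,\rmin]$ to the unperturbed arclength integral is $O(\sqrt{\kappa K(\rho)})$, not $O(\kappa K(\rho))$; this piece is cancelled by an equal $O(\sqrt{\kappa})$ piece of the bulk expansion, and exhibiting that cancellation is the technical heart of your version of the argument. The paper's substitution sidesteps all of this at once, which is why the proof there is so short.
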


This estimate is valid when ${\e f(0)}/{\rho^s}\leq |v_1-v_2|^2/4$, i.e. when the kinetic energy is higher than the potential energy. Note that in the following proof we use only that $\phi$ is a decreasing potential so the proof is valid for potentials with singularity $s>1$ and with non-compact support.
\begin{proof}
	This step is an adaptation of the proof of Proposition~1 in \cite{desvillettes_rigorous_2001}, for the case of singular potentials. 
	We change variables to the implicit variable $u$ given by
	\begin{align}\label{eq:uchange}
	u = \frac{\rho}{r \sqrt{1-2\kappa \phi(r)}}, \quad du = -\frac{\rho}{r^2\sqrt{1-2\kappa\phi(r)}}  \left( 1- \frac{r\kappa\phi'(r)}{1-2\kappa \phi(r)} \right) dr.
	\end{align}
	Using this we obtain
	\begin{equation} \label{eq:thetamanip} 
	\begin{aligned}
	\theta &= \pi -2  \int_{\rmin}^\infty \frac{\rho}{r^2\sqrt{1-\tfrac{\rho^2}{r^2}-\tfrac{4\Phi_\e(r)}{|v_1-v_2|^2} }} dr \\
	&= \pi -2  \int_{0}^1 \frac{1-2\kappa \phi(r) }{ (1-2\kappa \phi(r)- r\kappa \phi'(r))\sqrt{1-u^2}}du \\
	&= 2  \int_{0}^1 1-\frac{1-2\kappa \phi(r)}{ (1-2\kappa \phi(r)- r\kappa \phi'(r))\sqrt{1-u^2}}du\\
	&= 2  \int_{0}^1 \frac{r\kappa \phi'(r)}{ (1-2\kappa \phi(r)- r\kappa \phi'(r))\sqrt{1-u^2}}du.
	\end{aligned}  
	\end{equation}
	The integrand can be expanded as
	\begin{equation} \label{eq:expansion2} 
	\begin{aligned}
	\frac{r\kappa \phi'(r)}{ (1-2\kappa \phi(r)- r\kappa \phi'(r))} 
	= &\kappa \tfrac{\rho}{u} \phi'(\tfrac{\rho}{u}) + \kappa (r\phi'(r)- \tfrac{\rho}{u} \phi'(\tfrac{\rho}{u}) )  \\
	+&\kappa^2 \frac{\phi'(r)(2\phi(r)+r\phi'(r))}{1-2\kappa \phi(r)- r\kappa \phi'(r)}
	\end{aligned}
	\end{equation}
	For $\kappa<\frac{\rho^s}{4f(0)}$, we can bound the second and third term of the expansion~\eqref{eq:expansion2} by
	\begin{align*}
		\bigg| \kappa &(r\phi'(r)- \tfrac{\rho}{u} \phi'(\tfrac{\rho}{u}) ) +\kappa^2 \frac{\phi'(r)(2\phi(r)+r\phi'(r))}{1-2\kappa \phi(r)- r\kappa \phi'(r)}\bigg| \\
		&\leq  	 \kappa |r-\tfrac{\rho}{u}| \sup_{w\in \big(\tfrac{\rho}{u},r\big)} (|\phi'(w)|+w|\phi''(w)|) + \kappa^2 C K^2(\rho)
		\end{align*}
	Since $\phi(r)\geq0$ and $u\in (0,1)$, the change of variables~\eqref{eq:uchange} satisfies $r\geq \rho$. Therefore we have
	\begin{align}
		|r-\tfrac{\rho}{u}|=\Big|\frac{\rho}{u}\Big(1-\frac{1}{\sqrt{1-\kappa\phi(r)}}\Big)\Big| \leq \frac{C\rho}{u} |\kappa \phi(r)|\leq \frac{C \kappa K(\rho) \rho}{u}.
		\end{align}
	Inserting this above, we obtain 
	\begin{align}
	    \bigg| \kappa (r\phi'(r)- \tfrac{\rho}{u} \phi'(\tfrac{\rho}{u}) ) +\kappa^2 \frac{\phi'(r)(2\phi(r)+r\phi'(r))}{1-2\kappa \phi(r)- r\kappa \phi'(r)}\bigg| \leq  \kappa^2 C K^2(\rho).
	\end{align}
	In combination with the expansion~\eqref{eq:expansion2}
	this yields the desired estimate~\eqref{eq:Kasym}. 
\end{proof}
In the following two corollaries, we give integral estimates which can be derived from Lemma~\ref{lem:central}.
\begin{corollary}\label{cor:sinuses} 
    Let $\Phi$ satisfy Assumption~ \ref{ass:potential}, $s\geq 0$ and recall the notation $\kappa = \frac{2\eps}{|v_1-v_2|^2}$. Then the following estimates hold:  
    \begin{enumerate}
        \item For $s\in [0,1)$, we have
            \begin{equation} \label{eq:alpha0}
            	\begin{aligned}
	        \left|\int_0^1 \sin^2(\tfrac{\theta}{2})\,\rho\,d\rho -\kappa^2 c_\Phi  \right|&\leq C \kappa^{\frac{2}{s}},\\
	        \int_0^1 |\theta|^3\,\rho\,d\rho   &\leq C \kappa^{\frac{2}{s}} ,
	\end{aligned}
	\end{equation} 
	where $c_\Phi$ is given by~\eqref{eq:cPhi}.
	\item Similarly, for $s=1$ we can estimate
	\begin{equation} \label{eq:alpha1} 
	\begin{aligned}
	\left|\int_0^1 \sin^2(\tfrac{\theta}{2})\,\rho\,d\rho - \kappa^2 |\log \kappa|  c_\Phi  \right|&\leq C \kappa^2,\\
	\rw{\int_0^1 |\theta|^3}\rho\,d\rho  &\leq C \kappa^2.
	\end{aligned}
	\end{equation} 
    \end{enumerate}
\end{corollary}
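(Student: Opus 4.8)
The plan is to split the integration over the impact parameter $\rho\in(0,1)$ into the two regimes identified in the introduction: the ``weak collision'' regime where Lemma~\ref{lem:central} applies, and the ``Boltzmann-like'' regime of small $\rho$ where it does not. The threshold is exactly the collision length: set $\rho_* = (4f(0)\kappa)^{1/s}$, so that the hypothesis $\kappa < \rho^s/(4f(0))$ of Lemma~\ref{lem:central} holds precisely for $\rho>\rho_*$. (For $s=0$ the potential is bounded and one takes $\rho_*$ a fixed small constant, or rather the whole interval is in the good regime once $\kappa$ is small; the interesting cases are $s\in(0,1]$.) On $(0,\rho_*)$ we use only the crude bound $|\theta|\le\pi$, so that $\int_0^{\rho_*}\sin^2(\theta/2)\,\rho\,d\rho$ and $\int_0^{\rho_*}|\theta|^3\rho\,d\rho$ are both $\le C\rho_*^2 = C\kappa^{2/s}$. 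This is why the error exponent $2/s$ (resp. the bare $\kappa^2$ at $s=1$) appears, and it is consistent: for $s<1$ we have $2/s>2$, so this contribution is genuinely lower order than the main term $\kappa^2 c_\Phi$; at $s=1$ it is $\kappa^2$, which is smaller than $\kappa^2|\log\kappa|$.

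On the good regime $(\rho_*,1)$, write $\theta(\rho) = -2\kappa\, I(\rho) + R(\rho)$ with $I(\rho)=\int_0^1\frac{\rho}{u}\phi'(\frac{\rho}{u})\frac{du}{\sqrt{1-u^2}}$ and $|R(\rho)|\le C\kappa^2 K^2(\rho)$ from Lemma~\ref{lem:central}. Then $\sin^2(\theta/2) = \frac{\theta^2}{4} + O(\theta^4)$, and $\theta^2 = 4\kappa^2 I(\rho)^2 - 4\kappa I(\rho)R(\rho) + R(\rho)^2$. Hence
\begin{align}
\int_{\rho_*}^1 \sin^2(\tfrac{\theta}{2})\,\rho\,d\rho = \kappa^2\int_{\rho_*}^1 I(\rho)^2\rho\,d\rho + (\text{error terms}).
\end{align}
The main task is to evaluate $\kappa^2\int_{\rho_*}^1 I(\rho)^2\rho\,d\rho$ and control the errors using the behaviour of $I(\rho)$ and $K(\rho)$ as $\rho\to0$. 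Under Assumption~\ref{ass:potential}, $\phi'(r)\sim -s f(0) r^{-s-1}$ near $0$, so $I(\rho)\sim C\rho^{-s}$ and $K(\rho)\sim C\rho^{-s}$ as $\rho\to0$; thus $I(\rho)^2\rho\sim C\rho^{1-2s}$. For $s<1$ this is integrable at $0$, so $\int_{\rho_*}^1 I^2\rho\,d\rho \to \int_0^1 I^2\rho\,d\rho = c_\Phi$ with the tail $\int_0^{\rho_*}I^2\rho\,d\rho = O(\rho_*^{2-2s}) = O(\kappa^{(2-2s)/s}) = O(\kappa^{2/s-2})$, which after multiplication by $\kappa^2$ gives the claimed $O(\kappa^{2/s})$. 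For $s=1$, $I(\rho)^2\rho\sim f(0)^2\rho^{-1}$ (using $\big(\int_0^1 u^{-1}\cdot u \,du/\sqrt{1-u^2}\big)=1$ after the substitution, so the constant is exactly $f(0)^2$), hence $\int_{\rho_*}^1 I^2\rho\,d\rho = f(0)^2|\log\rho_*| + O(1) = f(0)^2|\log\kappa| + O(1)$, giving the logarithmic term with $c_\Phi=f(0)$ as in~\eqref{eq:cPhi}. The error terms from $R$: $\kappa\int_{\rho_*}^1|I(\rho)|\,|R(\rho)|\rho\,d\rho \le C\kappa^3\int_{\rho_*}^1 K^3(\rho)\rho\,d\rho \le C\kappa^3\rho_*^{2-3s}$ (for $s>2/3$, dominated by the lower endpoint) $= C\kappa^{2/s}$; similarly $\int R^2\rho\,d\rho\le C\kappa^4\rho_*^{2-4s}=C\kappa^{2/s}$ and the $O(\theta^4)$ remainder $\int\theta^4\rho\,d\rho \le C\kappa^4\int K^4\rho + \cdots$ is likewise $O(\kappa^{2/s})$ (resp. $O(\kappa^2)$ at $s=1$, where these become $\kappa^3|\log\kappa|$, $\kappa^4$, etc., all $\le C\kappa^2$). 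The $\int|\theta|^3\rho\,d\rho$ estimate is completely analogous and in fact easier: on $(0,\rho_*)$ it is $\le C\rho_*^2$, and on $(\rho_*,1)$ it is $\le C\kappa^3\int_{\rho_*}^1 K^3(\rho)\rho\,d\rho \le C\kappa^3\rho_*^{2-3s}$ (for $s\ge2/3$) $= C\kappa^{2/s}$, and one checks the exponent works for all $s\in(0,1]$ with the appropriate reading at $s=1$.

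The main obstacle is bookkeeping: one must verify that \emph{every} error term — the subleading part of Lemma~\ref{lem:central}, the $\sin^2$ vs. $\theta^2/4$ discrepancy, and the tail $\int_0^{\rho_*}$ — is dominated by $\kappa^{2/s}$ for $s\in(0,1)$ and by $\kappa^2$ for $s=1$, and this requires pinning down the precise power-law behaviour of $I(\rho)$ and $K(\rho)$ near $\rho=0$ from Assumption~\ref{ass:potential} and checking several exponent inequalities (e.g. that $\kappa^3\rho_*^{2-3s}\le\kappa^{2/s}$ amounts to $2/s\le 3 + (2-3s)/s$, i.e. $0\le1$, which holds; but one must also handle $s\le2/3$ where $K^3\rho$ is integrable at $0$ so the bound is just $C\kappa^3$, still $\le C\kappa^{2/s}$ since $2/s\ge3$). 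A secondary point worth care is the $s=1$ computation of the leading constant: one should double-check that the $\rho\to0$ asymptotics of $\rho I(\rho)^2$ integrates to $f(0)^2|\log\kappa|$ and not some multiple thereof, matching $c_\Phi=f(0)$ in~\eqref{eq:cPhi} (the factor conventions must be reconciled with the $2\pi$ and the $\frac14$ from $\sin^2(\theta/2)\approx\theta^2/4$).
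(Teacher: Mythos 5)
Your strategy is essentially the one the paper uses: split at the threshold $\rho_*=(4f(0)\kappa)^{1/s}$ below which Lemma~\ref{lem:central} is unavailable, use the crude bound $|\theta|\le\pi$ there (giving the $C\rho_*^2=C\kappa^{2/s}$ contribution), and above $\rho_*$ expand $\sin^2(\theta/2)$ around the square of the Born term $-2\kappa I(\rho)$, controlling all remainders through $K(\rho)\le C\rho^{-s}$. This is exactly the paper's argument, which is written out only for $s=1$ (there $\rho_*=4f(0)\kappa$, $I(\rho)=-f(0)/\rho+O(1)$, and the main term is $\kappa^2|\log\kappa|f(0)^2$); your computation of the $s=1$ constant agrees with the paper's proof, and your remark that this gives $f^2(0)$ rather than the $f(0)$ written in \eqref{eq:cPhi} correctly identifies a discrepancy in the paper's own normalization. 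For $s\in[2/3,1)$ your exponent bookkeeping is also correct and matches what the paper's ``straightforward adaptation'' would produce.

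There is, however, one concrete error in your error bookkeeping, precisely in the regime $s\le 2/3$: you claim that when $K^3\rho$ is integrable at $0$ the cross-term bound $C\kappa^3$ is ``still $\le C\kappa^{2/s}$ since $2/s\ge3$''. The inequality goes the other way: for $\kappa<1$ and $2/s\ge 3$ one has $\kappa^{2/s}\le\kappa^3$, so a bound of order $\kappa^3$ does \emph{not} imply the stated bound $C\kappa^{2/s}$. The same issue arises for the $\int|\theta|^3\rho\,d\rho$ estimate and for the $R^2$ and $\theta^4$ remainders when $s$ is small. What your method (and, as far as one can tell, the paper's) actually yields for $s<2/3$ is an error of order $\kappa^{\min(3,2/s)}=\kappa^3$ (with a logarithm at $s=2/3$), not $\kappa^{2/s}$; absent some cancellation in the $O(\kappa^2K^2)$ remainder of Lemma~\ref{lem:central}, the third-order term is genuinely present, so the literal exponent in \eqref{eq:alpha0} cannot be reached this way for mild singularities (and is ill-defined at $s=0$). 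This does not affect the use of the corollary in Theorem~\ref{thm:1}, since any error $o(\kappa^2)$ suffices there, but as a proof of the statement as written your argument has a gap exactly where you asserted the reversed inequality; you should either restrict the sharp exponent to $s\ge 2/3$ or state the error as $C\kappa^{\min(3,2/s)}$ (up to a logarithm at $s=2/3$).
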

\begin{proof}	
    We give the proof of~\eqref{eq:alpha1}, the adaptation to the case $s\in [0,1)$ is straightforward.
    
	We observe that there exists a constant $C>0$ such that
	\begin{align}
	K(\rho) &\leq \frac{C}{\rho},
	\end{align}
	which we use to infer the estimate 
	\begin{align}\label{eq:step1} 
	\left| \kappa \int_\rho^1\tfrac{\rho}{u}  \phi'(\tfrac{\rho}{u}) \frac{du}{\sqrt{1-u^2 }}\right| &\leq 	\left| \kappa \int_\rho^1 K(\rho) \frac{du}{\sqrt{1-u^2 }}\right| \leq \frac{C\kappa}{\rho }.
	\end{align}
	Next, we observe that by assumption on $\Phi$
	\begin{align*}
	r\phi'(r) = - \frac{f(r)}{r} + g(r),
	\end{align*}
	for some continuous, bounded function $g$. Therefore, for some constant $C>0$ we can estimate
	\begin{align*}
	\left| \int_\rho^1 \frac{\rho}{u} \phi'(\tfrac{\rho}{u})\frac{du}{\sqrt{1-u^2}}  + \frac{f(0)}{\rho} \right| \leq C.
	\end{align*}
	For $\rho>4f(0)\kappa$, we can apply~\eqref{eq:Kasym} and~\eqref{eq:step1} to obtain 
	\begin{align} \label{eq:step2} 
	    \Big|\frac{\theta(\rho)}{2}-\frac{\kappa f(0)}{\rho}\Big|&\leq C\Big(\kappa+\frac{\kappa^2}{\rho^2}\Big), \\
	    |\theta(\rho)|&\leq C\frac{\kappa}{\rho}.
	\end{align}
    We use these estimates to bound 
	\begin{align*}\Big|\sin^2\big(\tfrac{\theta}{2}\big)-\tfrac{f(0)^2}{\rho^2}\Big|&\leq \Big|\sin^2\big(\tfrac{\theta}{2}\big)-\big(\tfrac{\theta}{2}\big)^2\Big| + \Big|\big(\tfrac{\theta}{2}\big)^2-\big(\tfrac{\kappa f(0)}{\rho}\big)^2\Big|\\
	&\leq C|\theta|^3+C\tfrac{\kappa}{\rho}\big(\kappa+\tfrac{\kappa^2}{\rho^2}\big)\\
	&\leq C\big(\tfrac{\kappa^2}{\rho}+\tfrac{\kappa^3}{\rho^3}\big), \end{align*}
	for $\rho>4f(0)\kappa$.
    We now separate the regions $\rho>4f(0)\kappa$ and $\rho<4f(0)\kappa$ and conclude
	\begin{align*}
	\left|\int_0^1  \sin^2(\tfrac{\theta}2) \rho d\rho  -\kappa^2 |\log 4f(0)\kappa| f^2(0) \right|&\leq \int_0^{4f(0)\kappa}\rho\,d\rho+\int_{4f(0)\kappa}^1\Big|\sin^2\big(\tfrac{\theta}{2}\big)-\tfrac{f(0)^2}{\rho^2}\Big|\rho\,d\rho\\
	&\leq C\kappa^2 +\int_{4f(0)\kappa}^1C\big(\tfrac{\kappa^2}{\rho}+\tfrac{\kappa^3}{\rho^3}\big)\rho\,d\rho\\
	&\leq C\kappa^2,
	\end{align*}
	which proves the first assertion in~\eqref{eq:alpha1}. The second assertion follows immediately from~\eqref{eq:step2}. 
\end{proof}
\begin{corollary}
	For $s\in (0,1)$, we have the bound
	\begin{equation} \label{eq:a0}
	\begin{aligned}
	\left| \int_{S^2_+} \hv (v_1-v_2)\cdot \nu d\nu + \frac{8 \pi c_\phi \e^2 }{|v_1-v_2|^2} \eta \right| &\leq C \frac{\e^\frac2s }{|v_1-v_2|^{\frac4s -2}}  \\
	\left| \int_{S^2_+} (\hv \otimes \hv) (v_1-v_2)\cdot \nu d\nu - \frac{4\pi c_\phi  \e^2 }{|v_1-v_2|} P_\eta^\perp  \right| &\leq C\frac{\e}{|v_1-v_2|^{\frac4s-3}}  \\
	\int_{S^2_+} |\hv|^3 |(v_1-v_2)\cdot \nu| d\nu  &\leq C \frac{4\e^\frac2s}{|v_1-v_2|^{\frac4s -4}}. 
	\end{aligned}
	\end{equation} 
	
	For $s=1$, and $|v_1-v_2|\geq (2\e)^\frac12$ the following estimate holds:
	\begin{equation}
		\begin{aligned}\label{eq:a1}
		\left| \int_{S^2_+} \hv (v_1-v_2)\cdot \nu d\nu + \frac{8 \pi c_\Phi\e^2 |\log \e|}{|v_1-v_2|^2} \eta \right| &\leq C \frac{\e^2 |\log(v_1-v_2)|}{|v_1-v_2|^2}  \\
		\left| \int_{S^2_+} (\hv \otimes \hv) (v_1-v_2)\cdot \nu d\nu - \frac{4 \pi c_\Phi \e^2 |\log \e|}{|v_1-v_2|} P_\eta^\perp  \right| &\leq C \frac{\e^2 |\log(v_1-v_2)|}{|v_1-v_2|} \\
		\int_{S^2_+} |\hv|^3 |(v_1-v_2)\cdot \nu| d\nu  &\leq C \e^2.
		\end{aligned}
	\end{equation}
\end{corollary}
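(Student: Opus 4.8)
The plan is to deduce the vector-, tensor-, and cubic moment estimates of the deflection $\hv$ from the scalar estimates for $\sin^2(\theta/2)$ and $|\theta|^3$ in Corollary~\ref{cor:sinuses}, using the geometry encoded in~\eqref{eq:nuVariable}, the elementary identity~\eqref{eq:elementary}, and the explicit dependence $\kappa = 2\e/|v_1-v_2|^2$. First I would set up coordinates: since $\Phi$ is radial, fix the frame with $\eta = e_1$ and write $\nu = (\sqrt{1-\rho^2},\rho,0)$ as in~\eqref{eq:nuVariable}, with $\eta^\perp$ ranging over $S^1_\eta$; then $(v_1-v_2)\cdot\nu\,d\nu = |v_1-v_2|\,\rho\,d\rho\,d\eta^\perp$. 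The outgoing relative direction $\eta'$ lies in the plane spanned by $\eta$ and $\eta^\perp$ and makes angle $\theta$ with $\eta$, so $\heta = \eta'-\eta = (\cos\theta - 1)\eta + \sin\theta\,\eta^\perp$, whence by~\eqref{eq:elementary} $\hv = \tfrac{|v_1-v_2|}{2}\big((\cos\theta-1)\eta + \sin\theta\,\eta^\perp\big)$. This exact expression is the workhorse: all three left-hand sides in~\eqref{eq:a0}--\eqref{eq:a1} become integrals over $\rho\in(0,1)$ and $\eta^\perp\in S^1_\eta$ of explicit trigonometric functions of $\theta(\rho)$.

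The next step is to carry out the $\eta^\perp$-integration, which is elementary and produces the projections: $\int_{S^1_\eta}\eta^\perp\,d\eta^\perp = 0$, $\int_{S^1_\eta}\eta^\perp\otimes\eta^\perp\,d\eta^\perp = \pi P_\eta^\perp$, and $\int_{S^1_\eta} 1\,d\eta^\perp = 2\pi$. Thus the odd-in-$\eta^\perp$ contributions vanish and one is left with scalar $\rho$-integrals. Concretely, $\int_{S^2_+}\hv\,(v_1-v_2)\cdot\nu\,d\nu = \pi|v_1-v_2|^2\,\eta\int_0^1(\cos\theta-1)\rho\,d\rho = -2\pi|v_1-v_2|^2\,\eta\int_0^1\sin^2(\tfrac\theta2)\rho\,d\rho$; the tensor term splits into an $\eta\otimes\eta$ piece weighted by $(\cos\theta-1)^2 = 4\sin^4(\tfrac\theta2)$ and a $P_\eta^\perp$ piece weighted by $\tfrac12\sin^2\theta$; and $|\hv|^3 = \tfrac{|v_1-v_2|^3}{8}|\heta|^3$ with $|\heta|^2 = 2(1-\cos\theta) = 4\sin^2(\tfrac\theta2)$, so $|\hv|^3 = |v_1-v_2|^3\,|\sin(\tfrac\theta2)|^3$.

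Then I would insert the scalar bounds from Corollary~\ref{cor:sinuses}. For the $P_\eta^\perp$ coefficient in the tensor estimate, write $\tfrac12\sin^2\theta = 2\sin^2(\tfrac\theta2)\cos^2(\tfrac\theta2) = 2\sin^2(\tfrac\theta2) - 2\sin^4(\tfrac\theta2)$, so $\int_0^1\tfrac12\sin^2\theta\,\rho\,d\rho = 2\int_0^1\sin^2(\tfrac\theta2)\rho\,d\rho + O\big(\int_0^1\sin^4(\tfrac\theta2)\rho\,d\rho\big)$; the error is controlled by $\int_0^1|\theta|^3\rho\,d\rho$ (since $\sin^4(\tfrac\theta2)\le |\tfrac\theta2|^3$ when $|\theta|$ is bounded, which holds outside a $\rho$-set of measure $O(\kappa^2)$ after splitting at $\rho\sim\kappa$ exactly as in the proof of Corollary~\ref{cor:sinuses}). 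The $\eta\otimes\eta$ contribution $4\int_0^1\sin^4(\tfrac\theta2)\rho\,d\rho$ is likewise absorbed into the error via $|\theta|^3$. Using~\eqref{eq:alpha0}--\eqref{eq:alpha1} and substituting $\kappa^2 = 4\e^2/|v_1-v_2|^4$, $\kappa^{2/s} = (2\e)^{2/s}/|v_1-v_2|^{4/s}$, and (for $s=1$) $|\log\kappa| = |\log\e| + O(|\log|v_1-v_2||)$, one collects powers of $|v_1-v_2|$ and reads off precisely the stated main terms and remainders; the constant $c_\phi = c_\Phi$ enters through the leading term of $\int_0^1\sin^2(\tfrac\theta2)\rho\,d\rho$, and the prefactors $8\pi$ and $4\pi$ arise from $2\pi\cdot 2\cdot|\cdots|$ and $\pi\cdot 2\cdot\tfrac12\cdot|\cdots|$ respectively. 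The restriction $|v_1-v_2|\ge(2\e)^{1/2}$ in the $s=1$ case is exactly the condition $\kappa\le 1$ needed so that $|\log\kappa| = \log(1/\kappa) > 0$ and the log-expansion is valid.

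The main obstacle is bookkeeping rather than conceptual: one must track that the error terms generated by replacing $\sin^2\theta$ and $\sin^4(\tfrac\theta2)$ by their leading pieces, together with the small-$\rho$ region $\rho\lesssim\kappa$ where the asymptotic expansion of Lemma~\ref{lem:central} fails (and where one crudely bounds $|\hv|\le|v_1-v_2|$ and $\int_0^{C\kappa}\rho\,d\rho = O(\kappa^2)$), all combine to give exactly the claimed remainder orders $\e^{2/s}/|v_1-v_2|^{4/s-k}$ — and in the Coulomb case that the logarithmic factor $|\log\e|$ in the main term is correctly separated from the $|\log|v_1-v_2||$ absorbed into the error. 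A secondary technical point is justifying the representation $\hv = \tfrac{|v_1-v_2|}{2}\big((\cos\theta-1)\eta+\sin\theta\,\eta^\perp\big)$: this requires that the scattering is planar (the motion of the relative coordinate stays in the plane spanned by $\nu$ and $v_1-v_2$, i.e. the plane of $\eta$ and $\eta^\perp$), which follows from conservation of angular momentum for the central force problem, and that $|v_1'-v_2'| = |v_1-v_2|$ by conservation of kinetic energy in the elastic scattering, so that $\heta$ genuinely is a rotation of $\eta$ by the angle $\theta$ defined in~\eqref{eq:theta}.
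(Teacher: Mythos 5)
Your proposal is correct and follows essentially the same route as the paper: it uses the representation $\hv = \tfrac{|v_1-v_2|}{2}\big((\cos\theta-1)\eta+\sin\theta\,\eta^\perp\big)$ together with the change of variables $\nu\mapsto(\rho,\eta^\perp)$, integrates out $\eta^\perp$ via $\int_{S^1_\eta}\eta^\perp\otimes\eta^\perp\,d\eta^\perp=\pi P_\eta^\perp$, and reduces everything to the scalar bounds of Corollary~\ref{cor:sinuses} with $\kappa=2\e/|v_1-v_2|^2$. The only differences are cosmetic (you justify the planar scattering representation and track the $\sin^4(\theta/2)$ correction a bit more explicitly than the paper does).
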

\begin{proof}
	We will only demonstrate the proof of~\eqref{eq:a0}, the proof of~\eqref{eq:a1} is analogous. 
	
	By definition of $\theta$ (cf.~\eqref{eq:costheta}), the collision rule~\eqref{eq:costheta} and definition of $\eta^\perp$~\eqref{eq:nuVariable} we can rewrite $\hat{v}$ as
	\begin{align}\label{eq:hatv}
		\hat{v} = v_1' - v_1 = \frac{|v_1-v_2|}{2} \left( (\cos \theta -1)\eta + \sin (\theta) \eta^\perp\right).
	\end{align}
	Furthermore, we recall the change of variables~\eqref{eq:nuVariable} and compute 
	\begin{align}
		\int_{S^2_+} \hv (v_1-v_2)\cdot \nu d\nu &= \int_{S^1_\eta}\int_0^1 \rho \frac{|v_1-v_2|^2}{2}  \left( (\cos \theta -1)\eta + \sin (\theta) \eta^\perp\right) \ud{\rho}\ud{\eta^\perp }\\
		&= - 2\pi \int_0^1 \rho |v_1-v_2|^2  \sin^2(\tfrac{\theta}2)\eta  \ud{\rho}
	\end{align}
	The first line in~\eqref{eq:a0} now follows immediately from~\eqref{eq:alpha0}.  
	
	For the proof of the second line in~\eqref{eq:a0} we again use~\eqref{eq:hatv} and find
	\begin{align}
	     &\int_{S^2_+} (\hv \otimes \hv) (v_1-v_2)\cdot \nu d\nu\\
	     = &\int_0^1 \int_{S^1_\eta} \left( \frac{|v_1-v_2|}{2} \left( (\cos \theta -1)\eta + \sin (\theta) \eta^\perp\right)\right)^{\otimes 2}  |v_1-v_2| \rho \ud{\rho} \ud{\eta^\perp} \\
	     = &\int_0^1 \int_{S^1_\eta}   \left( \sin^2(\tfrac{\theta}{2})\eta + \frac{\sin (\theta)}{2} \eta^\perp\right)^{\otimes 2}  |v_1-v_2|^2 \rho \ud{\rho}\ud{\eta^\perp } \\
	     = &\int_0^1 \int_{S^1_\eta} (\sin^2(\tfrac{\theta}{2})\eta)^{\otimes 2} + (\frac{\sin (\theta)}{2} \eta^\perp)^{\otimes 2}  |v_1-v_2|^2 \rho \ud{\rho}\ud{\eta^\perp}\\
        = &\int_0^1 \int_{S^1_\eta} \big(\sin^4(\tfrac{\theta}{2})\eta^{\otimes 2} +  \sin^2(\tfrac{\theta}{2})\cos^2(\tfrac{\theta}{2})(\eta^\perp)^{\otimes 2}\big)  |v_1-v_2|^2 \rho \ud{\rho}\ud{\eta^\perp}.
	\end{align}
	Now the claim follows from~\eqref{eq:alpha0} and the identity
	\begin{align}
	    \int_{S^1_\eta } \eta^\perp \otimes \eta^\perp \ud{\eta^\perp} &= \pi P_\eta^\perp 
	\end{align}
	The third line in~\eqref{eq:a0} follows along the same lines using~\eqref{eq:alpha0}. 
\end{proof}

\subsection{Proof of Theorem~\ref{thm:1}} \label{subsec:Thm1}
We are now in the position to prove Theorem~\ref{thm:1}. 
\begin{proof}[Proof of Theorem~\ref{thm:1}]
	Let $\psi \in C^3_b(\R^3)$ be arbitrary, and $v_1\in \R^3$. We then expand $\psi$ to find 
	\begin{equation}\label{eq:expansion}
	\begin{aligned}
	\LL_\e \psi (v_1)  = &\frac12 \int_{\R^3} \int_{S^2_+} (\psi(v'_{1,\e})+\psi(v'_{2,\e})-\psi(v_1)-\psi(v_2)) M(v_2) (v_1-v_2)\cdot \nu dv_2 d\nu \\
	= &I_1(v)+I_2(v)+R(v_1),
	\end{aligned}
	\end{equation} 
	where $I_1$ and $I_2$ are given by
	\begin{align}
	I_1(v)=&\frac12 \int_{\R^3} \int_{S^2_+} (\hv \cdot \nabla \psi(v_1)+\frac12  \nabla^2 \psi(v_1) : [\hv\otimes \hv ] ) M(v_2) (v_1-v_2)\cdot \nu dv_2 d\nu \\
	I_2(v) =& -\frac12 \int_{\R^3} \int_{S^2_+} (\hv \cdot \nabla \psi(v_2)+\frac12  \nabla^2 \psi(v_2) : [\hv\otimes \hv ] ) M(v_2) (v_1-v_2)\cdot \nu dv_2 d\nu,
	\end{align}
	and $R(v_1)$ is a remainder which can be estimated by
	\begin{align}
	|R(v_1)|\leq C_\psi \int_{\R^3} \int_{S^2_+} |\hv|^3 M(v_2) |(v_1-v_2)\cdot \nu| dv_2 d\nu. 
	\end{align}
	We now distinguish the case $s=1$ from $s\in [0,1)$.
	
	\step1 {$s=1$:}
	
	From~\eqref{eq:a1} we obtain 
	\begin{align}
	|R(v_1)| \leq C_\psi \e^2. 
	\end{align}
	It remains to determine the limit of $I_1$ and $I_2$. Using~\eqref{eq:a1} we get
	\begin{align}
	&\left| I_1(v_1)+2\pi \e^2 |\log \e| c_\Phi \int_{\R^3} \frac{4\eta \nabla \psi(v_1) } {|v_1-v_2|} -\frac{P_\eta^\perp}{|v_1-v_2|}: \nabla^2 \psi(v)  M(v_2 )dv_2\right| \\
	\leq C_\psi&\int_{\{v_2:|v_1-v_2|\geq \e^\frac12\}}   \left(\frac{\e^2 |\log (|v_1-v_2|)|}{|v_1-v_2|^2} +\frac{\e^2 |\log (|v_1-v_2|)|}{|v_1-v_2|} \right) M(v_2) dv_2  \\
	+& \int_{\{v_2:|v_1-v_2|\leq \e^\frac12\}}  |\hv| M(v_2) dv_2
	\\
	\leq C_\psi  &\e^2.
	\end{align} 
	Notice that in the last inequality we have made use of $|\hv|\leq |v_1-v_2|$, which follows from~\eqref{eq:elementary}. Performing the same estimate for $I_2$ we obtain
	\begin{align}
	&\left| \LL_\e \psi (v_1) - 2\pi \e^2 |\log \e| c_\Phi \LL_0\psi (v_1)\right| \leq C_\psi \e^2, \quad \text{where}\\
	&\LL_0\psi = \int_{\R^3} \left(\frac{4 \eta \cdot  (\nabla \psi(v_2)-\nabla \psi(v_1))}{|v_1-v_2|^2}  + \frac{P_\eta^\perp: (\nabla^2 \psi(v_1)+\nabla^2 \psi(v_2))}{|v_1-v_2|} \right) M(v_2) dv_2 .
	\end{align}
	
	\step3 It remains to show that $\LL_0$ coincides with the desired Landau operator.
	
	To this end, we first remark that 
	\begin{align}\label{eq:matrixId}
	\nabla \cdot \frac{ P_\eta^\perp}{|v_1-v_2|} = -2 \frac{v_1-v_2}{|v_1-v_2|^3}.
	\end{align}
	This allows us to rewrite $\LL_0$ as (writing $\mathcal{A}=\tfrac{ P_\eta^\perp}{|v_1-v_2|}$ ) 
	\begin{align*}
	\LL_0\psi = &\int_{\R^3} \left(\frac{4 \eta \cdot  (\nabla \psi(v_2)-\nabla \psi(v_1))}{|v_1-v_2|^2}  + \frac{P_\eta^\perp: (\nabla^2 \psi(v_1)+\nabla^2 \psi(v_2))}{|v_1-v_2|} \right) M(v_2) dv_2  \\
	= &\int_{\R^3}\left(  (\nabla_{v_1}-\nabla_{v_2})(\mathcal{A}) (\nabla \psi(v_1)-\nabla \psi(v_2)) +\mathcal{A} (\nabla^2 \psi(v_1)+\nabla^2 \psi(v_2))\right)  M(v_2) dv_2 \\
	= &\frac{1}{M(v_1)}\int_{\R^3}  (\nabla_{v_1}-\nabla_{v_2})\left[\mathcal{A} (\nabla \psi(v_1)-\nabla \psi(v_2)) +\mathcal{A} (\nabla^2 \psi(v_1)\right] M(v_1)M(v_2) dv_2,
	\end{align*}
	which finally leads to
	\begin{align*}
	\LL_0\psi 	= &\frac{1}{M(v_1)} \nabla_{v_1} \cdot \left( \int_{\R^3}  \mathcal{A} (\nabla \psi(v_1)-\nabla \psi(v_2)) +\mathcal{A} (\nabla^2 \psi(v_1) M(v_1)M(v_2) dv_2 \right) \\
	-&\frac{1}{M(v_1)}\int_{\R^3} \left(\mathcal{A} (\nabla \psi(v_1)-\nabla \psi(v_2)) +\mathcal{A} (\nabla^2 \psi(v_1)\right) (\nabla_{v_1}-\nabla_{v_2})\left[M(v_1)M(v_2) \right] dv_2.
	\end{align*}
	The first line coincides with $\KK \psi$. 
	It now remains to remark that the last line vanishes
	\begin{align*}
	\mathcal{A} \cdot (\nabla_{v_1}-\nabla_{v_2})\left[M(v_1)M(v_2) \right] = 0,
	\end{align*}
	hence $\LL_0 = \KK$.
	
	\step4 The case $s \in [0,1)$ follows analogous to the case $s=1$.  
	
	\step5 It remains to prove the identity~\eqref{equiv:cPhi}.
	We start by changing variables $r=\frac{\rho}{u}$ 
    \begin{align}
       I(\rho):=\int_0^1\frac{\rho}{u}\phi'\big(\frac{\rho}{u}\big)\frac{du}{\sqrt{1-u^2}} &= \int_\rho^1 \frac{\rho \phi'(r)}{r\sqrt{1-\tfrac{\rho^2}{r^2}}} \ud{r}. 
    \end{align}
    We further change variables $y^2 +\rho^2 = r^2$ to find
    \begin{align}
        I(\rho)&= \frac12 \int_\Reals \frac{\phi'(\sqrt{y^2+\rho^2})\rho}{\sqrt{y^2+\rho^2}} = \frac12 \int_\Reals \nabla \Phi(\rho e_1 +y e_2) \cdot e_2 \ud{y}
    \end{align}
    Hence, $I(\rho)$ is the rectilinear force evaluated along a trajectory. Due to the radial symmetry of $\Phi$ we have
    \begin{align}
        I^2(\rho) = \frac14 \left|\int_\Reals \nabla \Phi(\rho e_1 +y e_2) \ud{y}\right|^2.
    \end{align}
    Inserting this representation back into the formula for $c_\Phi$ reveals
    \begin{align}
        c_\Phi=&\int_0^1 \bigg(\int_0^1\frac{\rho}{u}\phi'\big(\frac{\rho}{u}\big)\frac{du}{\sqrt{1-u^2}}\bigg)^2\rho\,d\rho = \int_0^1 I(\rho)^2\rho\,d\rho \\
        =&\int_0^1 \frac14 \left|\int_\Reals \nabla \Phi(\rho e_1 +y e_2) \ud{y}\right|^2 \rho\,d\rho \\
        =&\frac1{8\pi} \int_{\R^3} \delta(e_1 \cdot x)  \left|\int_\Reals \nabla \Phi(x +y e_1) \ud{y}\right|^2 \,dx.
    \end{align}
    We pass to Fourier variables using Plancherel's identity
    \begin{align}
        c_\Phi=&\frac1{8\pi} \int_{\R^3} \delta(e_1 \cdot x)\left|\int_\Reals \nabla \Phi(x +y e_1) \ud{y}\right|^2 \ud{x}\\
        &=\frac1{32\pi^3}\int_{\R^3} \delta(e_1 \cdot k)  |k|^2|\hat{\Phi}(k)|^2 \ud{x}\\
        &=\frac1{16\pi^2}\int_0^\infty k^3|\hat{\Phi}(k)|^2dk,
    \end{align}
    and obtain the desired identity.
\end{proof}

\subsection{Proof of Corollary~\ref{cor:solutions}} \label{subsec:solutions}

\begin{proof}[Proof of Corollary~\ref{cor:solutions}]
	We use the well-known fact that $\LL_\e$ is a family of self-adjoint non-positive operators on ${L^2_M(\R^3 \times \R^3)}$. Indeed, symmetrizing the operator we obtain for any function $\psi\in {L^2_M(\R^3 \times \R^3)}$ 
	\begin{align}
	(\LL_\e \psi , \psi)_{L^2_M} = - &\frac14 \iint_{\R^3\times \R^3}\int_{S^2_+}  (\psi_1'+ \psi_2'-\psi_1-\psi_2)^2  M(v_1)M(v_2) (v_1-v_2)\cdot \nu \ud{v_1} \ud{v_2} \ud{\nu}\ud{x} \\
	&\leq 0.
	\end{align}
	This immediately shows our solutions are uniformly bounded by
	\begin{align}
	\|\psi_\e(t)\|_{L^2_M(\R^3 \times \R^3)} \leq 	\|\psi_\e^\circ \|_{L^2_M(\R^3 \times \R^3)}. 
	\end{align}
	By compactness, there exists a $\psi$ in $L^\infty(\R^+,{L^2_M(\R^3 \times \R^3)})$ and a subsequence $\psi_\e$, such that 
	\begin{align}
	\psi_\e \rightharpoonup^* \psi, \quad \text{in } L^\infty(\R^+,{L^2_M(\R^3 \times \R^3)}).
	\end{align}
	It remains to show that $\psi$ is the desired solution of the linearized Landau equation. To this end, let $\zeta\in C^\infty_c(\R\times\R^3 \times \R^3)$ be arbitrary. Using the self-adjointness of $\LL_\e$ we get
	\begin{align}
	&( \psi_\e(t),\zeta(t))_{L^2_M(\R^3 \times \R^3)}-( \psi^\circ,\zeta(0))_{L^2_M(\R^3 \times \R^3)} \\
	= &  \mathfrak{d}_\e^{-1} \int_0^t (\LL_\e \psi_\e(s), \zeta(s))_{L^2_M(\R^3 \times \R^3)} ds + \int_0^t ( \psi_\e(s),v \cdot \nabla_x \zeta(s))_{L^2_M(\R^3 \times \R^3)} ds \\
	= & \mathfrak{d}_\e^{-1} \int_0^t ( \psi_\e(s),\LL_\e  \zeta(s))_{L^2_M(\R^3 \times \R^3)} ds + \int_0^t ( \psi_\e(s),v \cdot \nabla_x \zeta(s))_{L^2_M(\R^3 \times \R^3)} ds .
	\end{align}
	Now we can pass to the limit. Since $\mathfrak{d}_\e^{-1}\LL_\e \zeta \rightarrow \KK \zeta$ in $L^1(\R^+;{L^2_M(\R^3 \times \R^3)})$, and $\psi_\e \rightharpoonup^* \psi$, we get
	\begin{align}
	&( \psi_\e(t),\zeta(t))_{L^2_M(\R^3 \times \R^3)}-( \psi^\circ,\zeta(0))_{L^2_M(\R^3 \times \R^3)}	 \\
	=&\int_0^t ( \psi(s),\KK  \zeta(s))_{L^2_M(\R^3 \times \R^3)}+ ( \psi_\e(s),v \cdot \nabla_x \zeta(s))_{L^2_M(\R^3 \times \R^3)} ds .
	\end{align}
	Since $\KK$ is also self-adjoint, $\psi$ satisfies the linearized Landau equation, and the claim follows. 
\end{proof}

\section{Convergence to non-cutoff Boltzmann equation for short-range potentials with \texorpdfstring{$s>1$}{s > 1}} \label{sec:hard}

\begin{proof}[Proof of Theorem~\ref{thm:2}]
	In order to simplify the presentation, fix $f(0)=1$.
	
	We define the auxilary potentials
	\begin{align}
	\Lambda(x) &= \lambda(|x|), \quad &\lambda(r)&= 1/r^s , \label{def:lambda} \\
	\Lambda_\e(x) &= \lambda_\e(|x|), \quad &\lambda_\e(r)&= f(\e r)/r^s,\label{def:lambdae} 
	\end{align}
	and observe that the following identity holds:
	\begin{align}
	\Phi_\e(x) = \Lambda_{\e^{{1}/{s}}}\bigg(\frac{x}{\e^{{1}/{s}}}\bigg).
	\end{align} 
	Due to this scale invariance, we can relate the scattering map associated to $\Phi_\e$ to the scattering map associated to $\Lambda_{\e^\frac1{s}}$. More precisely, let $X_i(t)$, $V_i(t)$, $t\in \R$, $1\leq i\leq 2$ solve
	\begin{align}
	\dot{X}_i(t) = V_i, \quad \dot{V}_i(t) = -\sum_{j\neq i}\nabla \Phi_\e (X_i-X_{i}), 
	\end{align}
	then $\tilde{X}_i(t) = \e^{-\frac1{s}}X_i(t \e^{\frac1{s}})$, $\tilde{V}_i(t)=V_i(t \e^{\frac1{s}})$ solve
	\begin{align}
	\dot{\tilde X}_i(t) = V_i, \quad \dot{\tilde V}_i(t) = -\sum_{j\neq i}\nabla \Lambda_{\e^\frac1{s}} (\tilde X_i-\tilde X_{i}).
	\end{align}
	Therefore  the scattering of particles through $\Phi_\e$ with collision parameters $(v_1,v_2,\rho,\eta^\perp)$ are up to scaling equivalent to the scattering through $\Lambda_{\e^{{1}/{s}}}$ with collision parameters $(v_1,v_2,\e^{-1/s}\rho,\eta^\perp)$. Hence, the corresponding linearized Boltzmann operators satisfy
	\begin{align}\label{eq:BMscaling}
	\e^{-2/s}\mathcal{L}_{\Phi_\e} = \mathcal{L}_{\Lambda_{\e^{{1}/{s}}}}.
	\end{align}	
	Therefore, it remains to prove that for any $\psi\in\mathcal{C}^\infty_c(\mathbb{R}^3)$, 
	\[\mathcal{L}_{\Lambda_{\e}}\psi\underset{\e\rightarrow 0}{\longrightarrow}\mathcal{L}_{\infty}\psi:=\mathcal{L}_{\Phi}\psi\]
	uniformly on $\mathbb{R}^3$.
	
	We will do so by direct comparison of the scattering problems associated to the potentials $\Lambda_\e$ and $\Phi$. 
	
	To this end, let $v_1\neq v_2$ be incoming velocities and $\rho$ an impact factor. As before we write 
	\begin{align}
	\eta := \frac{v_1-v_2}{|v_1-v_2|}.
	\end{align}
	We denote by $(v_{1,\e}',v_{2,\e}')$ the post-collisional velocities and by $\theta_\e$  the deviation angle obtained for the pair potential $\Lambda_\e$ for a given impact parameter $\rho$. Similarly, we denote by $v_1'$, $v_2'$ and $\theta$ the corresponding variables associated to the scattering via the limiting potential $\Phi$. Recall that the outgoing velocities are given by ($i=1,2$):
	\begin{equation} \label{eq:thetaRepr}
	\begin{aligned}
	v_{i}' &= \frac{v_1+v_2}{2}-(-1)^i \frac{|v_1-v_2|}{2}\left(\sin \theta \cdot  \eta^\perp+\cos \theta \cdot \eta\right),\\
	v_{i,\e}' &= \frac{v_1+v_2}{2}-(-1)^i \frac{|v_1-v_2|}{2}\left(\sin \theta_{\e} \cdot \eta^\perp+\cos \theta_{\e} \cdot \eta\right).
	\end{aligned}	
	\end{equation}
	We separate the regions	
	\begin{align}
	\mathcal{Q}_\e&:=\{(v_2,\rho)\in\mathbb{R}^3\times\mathbb{R}^+,\,|v_2-v_1|>3\e^{s/20},\,\rho<\e^{-1/10}/2\},\\
	\mathcal{P}_\e &:= \R^3 \times \R^+ \setminus \mathcal{Q}_\e. 
	\end{align}
	On the set $\mathcal{Q}_\e$, we have good control of the scattering problem, while the set $\mathcal{P}_\e$ leads to an error term. We therefore introduce separate their respective contributions to $\LL_{\Phi}$ and $\LL_{\Lambda_\e}$ by introducing the decomposition
	\begin{equation}
	\LL_{\Phi} \psi (v_1) =\LL_{\Phi,1} \psi (v_1) + \LL_{\Phi,2} \psi (v_1),
	\end{equation}
	where $\LL_{\Phi,1}$ and $\LL_{\Phi,2}$ are given by
	\begin{align*}
	\LL_{\Phi,1} \psi (v_1) &=  \frac12  \iint_{\mathcal{Q}_\e} \int_{S^1_\eta} \big(\psi(v'_{1})+\psi(v'_{2})-\psi(v_1)-\psi(v_2)\big)  \rho |v_2-v_1|M(v_2) d\eta^\perp d\rho\, dv_2,\\
	\LL_{\Phi,2} \psi (v_1) &=  \frac12  \iint_{\mathcal{P}_\e} \int_{S^1_\eta} \big(\psi(v'_{1})+\psi(v'_{2})-\psi(v_1)-\psi(v_2)\big)  \rho |v_2-v_1|M(v_2) d\eta^\perp d\rho\, dv_2.
	\end{align*}
	In the same way, we decompose $\LL_{\Lambda_\e}=\LL_{\Lambda_\e,1}+\LL_{\Lambda_\e,2}$.
	
	We begin by estimating the error terms $\LL_{\Phi,2}$, $\LL_{\Lambda_\e,2}$. 
	\begin{lemma}\label{lem:LL2}
		For $\psi\in C^2_b(\R^3)$ we have the estimate
		\begin{align}
			\|\LL_{\Lambda_\e,2} \psi\|_{L^\infty(\R^3)} + \|\LL_{\Phi,2} \psi\|_{L^\infty(\R^3)} \leq C\left(\|\nabla\psi\|+\|\nabla^2\phi\|\right)\e^{\frac{2(s-1)}{10}}.
		\end{align}
	\end{lemma}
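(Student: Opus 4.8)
The plan is to estimate separately the contributions of the two pieces of $\mathcal P_\e$ — the \emph{slow} piece $\{|v_1-v_2|\le 3\e^{s/20}\}$ and the \emph{far} piece $\{|v_1-v_2|>3\e^{s/20},\ \rho\ge\e^{-1/10}/2\}$ — uniformly in $v_1$, handling $\LL_{\Lambda_\e,2}$ and $\LL_{\Phi,2}$ (the error term for the homogeneous limit potential) at once: write $\Psi$ for either $\Lambda_\e$ or the limit potential, whose radial profiles are both $\le|x|^{-s}$ so that $K(\rho)\le C\rho^{-s}$ uniformly in $\e$; put $\kappa=2/|v_1-v_2|^2$ and let $\theta(\rho)$ be the deflection angle of $\Psi$. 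For fixed $v_1\ne v_2$ I split the $\rho$-axis at $\rho_\ast(v_1,v_2):=(8/|v_1-v_2|^2)^{1/s}$, which is precisely the threshold above which Lemma~\ref{lem:central} applies and gives $|\theta(\rho)|\le C\kappa\rho^{-s}=C\rho^{-s}/|v_1-v_2|^2$; for $\rho<\rho_\ast$ I only use the trivial bound $|\hv|\le|v_1-v_2|$ from \eqref{eq:elementary}.

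The structural ingredient is the cancellation obtained by integrating the azimuthal variable $\eta^\perp\in S^1_\eta$ first. Since $\theta$ is independent of $\eta^\perp$ and $\int_{S^1_\eta}\hv\,d\eta^\perp=-2\pi|v_1-v_2|\sin^2(\tfrac\theta2)\eta$ by \eqref{eq:hatv}, a second-order Taylor expansion of $\psi$ at $v_1$ and at $v_2$ (using $v_2'=v_1+v_2-v_1'$) yields
\begin{align*}
\Big|\int_{S^1_\eta}\big(\psi(v_1')+\psi(v_2')-\psi(v_1)-\psi(v_2)\big)\,d\eta^\perp\Big|\le C\big(\|\nabla\psi\|+\|\nabla^2\psi\|\big)|v_1-v_2|^2\min(1,\theta^2),
\end{align*}
because the part of $\hv$ linear in $\eta^\perp$ averages to zero, leaving only the $O(\theta^2)$ head-on contribution. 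On the far piece, inserting this bound, $|\theta(\rho)|\le C\rho^{-s}/|v_1-v_2|^2$ and the weight $\rho|v_1-v_2|M(v_2)$, and integrating $\rho$ over $\{\rho\ge\max(\rho_\ast,\e^{-1/10}/2)\}$, collapses the $\rho$-integral to $\int_{\e^{-1/10}/2}^{\infty}\rho^{1-2s}\,d\rho\le C\e^{2(s-1)/10}$ (finite since $s>1$), multiplied by $\sup_{v_1}\int_{\R^3}|v_1-v_2|^{-1}M(v_2)\,dv_2<\infty$; this produces exactly the claimed power.

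For the slow piece and the leftover sliver $\{\rho_\ast\le\rho<\e^{-1/10}/2\}$, one observes that lying in $\mathcal P_\e$ with $\rho$ below the far cutoff, or with $\rho<\rho_\ast$, forces $|v_1-v_2|\lesssim\e^{s/20}$. Using the same $S^1_\eta$-cancellation with $\min(1,\theta^2)\le1$ for $\rho<\rho_\ast$ and with $\theta^2\le C\rho^{-2s}/|v_1-v_2|^4$ for $\rho\ge\rho_\ast$, the $\rho$-integral contributes $\int_0^{\rho_\ast}\rho\,d\rho\sim|v_1-v_2|^{-4/s}$, respectively $\int_{\rho_\ast}^\infty\rho^{1-2s}\,d\rho\sim|v_1-v_2|^{4-4/s}$, so in both cases the velocity integrand is bounded by $C(\|\nabla\psi\|+\|\nabla^2\psi\|)|v_1-v_2|^{3-4/s}M(v_2)$. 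Since $s>1$ we have $3-4/s>-3$, hence the singularity is locally integrable and $\int_{|v_1-v_2|\lesssim\e^{s/20}}|v_1-v_2|^{3-4/s}\,dv_2\le C\e^{(6s-4)/20}\le C\e^{2(s-1)/10}$. Summing the three contributions proves the estimate; for $\LL_{\Lambda_\e,2}$ the argument is identical with all $\rho$-integrals truncated at $\rho=1/\e$, which only improves the bounds.

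The main obstacle is the far region for $1<s\le2$: there $|x|^{-s}$ generates a genuinely non-cutoff Boltzmann kernel, so no pointwise bound of $\psi(v_1')+\psi(v_2')-\psi(v_1)-\psi(v_2)$ by $|\hv|$ alone is integrable against $\rho\,d\rho$ up to infinity, and the $\theta^2$-cancellation of the $S^1_\eta$-average must be extracted \emph{before} the small-angle asymptotics of Lemma~\ref{lem:central} are used. A secondary nuisance is that Lemma~\ref{lem:central} is inapplicable for slow velocities $|v_1-v_2|\lesssim\e^{s/20}$ down to $\rho=\e^{-1/10}/2$; this is harmless since that velocity set has measure $\sim\e^{3s/20}$ and the uncontrolled $\rho$-range is absorbed by the integrable weight $|v_1-v_2|^{3-4/s}$.
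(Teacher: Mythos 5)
Your proof is correct and follows essentially the same route as the paper: the $S^1_\eta$-average cancellation reducing the integrand to $O\big(\sin^2(\tfrac\theta2)\big)$ times velocity factors, the small-deflection bound $|\theta|\leq C\rho^{-s}|v_1-v_2|^{-2}$ from Lemma~\ref{lem:central} on the large-$\rho$ part, and the observation that where that lemma is inapplicable the relative velocity is forced to be $O(\e^{s/20})$, giving exactly the exponents $\e^{2(s-1)/10}$ and $\e^{(6s-4)/20}$ (the paper gets $\e^{(5s-4)/20}$ for the slow region, equally admissible). The only differences are cosmetic bookkeeping: you split at the threshold $\rho_*(v_1,v_2)$ where Lemma~\ref{lem:central} becomes applicable, while the paper uses three fixed sub-regions of $\mathcal{P}_\e$.
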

	\begin{proof}
	Using~\eqref{eq:thetaRepr} together with the simple identity
	\begin{align}
		v_1 = \frac{v_1+v_2}{2} + \frac{|v_1-v_2|}{2} 
		\eta, \quad  v_2 = \frac{v_1+v_2}{2} - \frac{|v_1-v_2|}2 \eta,
	\end{align}
	we obtain the estimate
	\begin{align*}
		&\bigg|\int_{S^1_\eta} \big(\psi(v'_{1})+\psi(v'_{2})-\psi(v_1)-\psi(v_2)\big)  \ d\eta^\perp\bigg|\\
	\leq &\bigg|\int_{S^1_\eta} 2\nabla\psi\left(\tfrac{v_1+v_2}{2}\right)\cdot\frac{|v_1-v_2|}{2}\left(\sin \theta\eta^\perp+(1-\cos \theta)\eta\right)   \ d\eta^\perp\bigg|\\
	&+2\pi\|\nabla^2\psi\|_{L^\infty(\R^3)}\frac{|v_1-v_2|^2}{8}\left(\sin^2 \theta+(1-\cos \theta)^2\right)\\
	\leq &C\sin^2\left(\tfrac{\theta}{2}\right) \left(\|\nabla\psi\|_{L^\infty(\R^3)} |v_1-v_2|+\|\nabla^2\psi\|_{L^\infty(\R^3)} |v_1-v_2|^2\right) .
	\end{align*}
	Note that the last inequality follows from
	\begin{align}
		1-\cos(\theta) = 2 \sin^2(\tfrac{\theta}2). 
	\end{align} 
	Inserting the estimate into the definition of $L_{\Phi,2}$ yields 
	\begin{align*}&\left|\LL_{\Phi,2}\psi(v_1)\right|\leq C \big(\|\nabla\psi\|_{L^\infty}+\|\nabla^2\psi\|_{L^\infty}\big)  \iint_{\mathcal{P}_\e} \sin^2\left(\tfrac{\theta}{2}\right)\left(|v_1-v_2|^2+|v_1-v_2|^3\right)\rho M(v_2)dv_2\,d\rho\\
	&\leq C \big(\|\nabla\psi\|_{L^\infty}+\|\nabla^2\psi\|_{L^\infty}\big) \iint_{\substack{|v_1-v_2|\leq 3\e^{\frac{s}{20}}\\2 \rho<\e^{-\frac{1}{10}}}}\left(|v_1-v_2|^2+|v_1-v_2|^3\right)\rho M(v_2)dv_2\,d\rho\\
	&~+ C \big(\|\nabla\psi\|_{L^\infty}+\|\nabla^2\psi\|_{L^\infty}\big) \iint_{\substack{|v_1-v_2|^2\rho^s\leq 4 \\2 \rho\geq\e^{-\frac{1}{10}}}}\left(|v_1-v_2|^2+|v_1-v_2|^3\right)\rho M(v_2)dv_2\,d\rho\\
	&~+ C \big(\|\nabla\psi\|_{L^\infty}+\|\nabla^2\psi\|_{L^\infty}\big) \iint_{\substack{|v_1-v_2|^2\rho^s\geq 4 \\ 2\rho\geq\e^{-\frac{1}{10}}}}\sin^2\left(\tfrac{\theta}{2}\right)\left(|v_1-v_2|^2+|v_1-v_2|^3\right)\rho M(v_2)dv_2\,d\rho.
	\end{align*}
	The integrals appearing in the first two terms can be bounded by
	\begin{align}
		&\iint_{\substack{|v_1-v_2|\leq 3\e^{\frac{s}{20}}\\ 2\rho<\e^{-\frac{1}{10}}}}\left(|v_1-v_2|^2+|v_1-v_2|^3\right)\rho M(v_2)dv_2\,d\rho  \\
		+&\iint_{\substack{|v_1-v_2|^2\rho^s\leq 4 \\2 \rho\geq\e^{-\frac{1}{10}}}}\left(|v_1-v_2|^2+|v_1-v_2|^3\right)\rho M(v_2)dv_2\,d\rho \leq C \e^{(5 s-4)/20}.
	\end{align}	
	It remains to estimate the third term. The estimate 
	\begin{align}
	\Bigg|\kappa\int_0^1\tfrac{\rho}{u}\Phi'\left(\tfrac{\rho}{u}\right)\frac{du}{\sqrt{1-u^2}}\Bigg|\leq\frac{C\kappa}{\rho^s},
	\end{align} 
	together with~\eqref{eq:Kasym} imply that for $\kappa=|v_1-v_2|^{-2}$ and $K(\rho)<C/r^s$ we have
	\begin{align}
		|\theta|\leq C(\kappa/\rho^s+\kappa^2/\rho^{2s})\leq 6C\kappa/\rho^{s}.
	\end{align}
	This gives us the desired bound for the integral in the third term:
	\begin{equation*}
		\iint_{\substack{v_2\in\mathbb{R}^3\\ \rho>\e^{-\frac{1}{10}}}} \left(|v_1-v_2|^{-2}+|v_1-v_2|^{-1}\right)\rho^{1-2s} M(v_2)d\rho\,dv_2\leq C\,\e^{2(s-1)/10}.
	\end{equation*}
	Collecting all the estimates, we obtain the desired bound for $L_{\Phi,2}\psi$
	\begin{equation}
	\left|\LL_{\Phi,2}\psi(v_1)\right|\leq C\left(\|\nabla\psi\|+\|\nabla^2\phi\|\right)\e^{\frac{2(s-1)}{10}}.
	\end{equation}
	The same estimation holds for $\LL_{\Lambda_\e,2}$.
	\end{proof} 
	
	It remains to estimate the difference 
	\begin{align*}
	\LL_{\Phi_\e,1}\psi(v_1)&-\LL_{\Phi,1}\psi(v_1)\\
	&=\iint_{\mathcal{Q}_\e}\int_{S_\eta^1} \left(\psi(v'_{1,\e})-\psi(v'_{1})+\psi(v'_{2,\e})-\psi(v'_{2})\right)d\eta^\perp|v_1-v_2|\rho M(v_2)dv_2\,d\rho.
	\end{align*}
	We constructed the set $\mathcal{Q}_\e$ in such a way that we can directly compare the scattering problems associated to $\Lambda_\e$ and $\Phi$. The following Lemma shows the convergence of the deviation angles, i.e. $\theta_\e \rightarrow \theta$.
	\begin{lemma}
		For $(\rho,v_2)\in\mathcal{Q}_\e$, there exists a constant $C$ independant of $\e$ such that
		\begin{equation}\label{eq: est angle}
		\left|\theta_\e-\theta\right|\leq C\left[\e^{3/10}+\min\left(1,\frac{\e^{4/10}}{\rho^{s-1}}\right)\right].
		\end{equation}
	\end{lemma}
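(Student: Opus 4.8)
The plan is to compare the two scattering angles via the integral representation~\eqref{eq:theta}, writing both $\theta_\e$ and $\theta$ as $\pi$ minus twice an integral over the radial variable, and then estimating the difference of the integrands. To make the comparison clean I would first pass to the $u$-variable of~\eqref{eq:uchange} used in Lemma~\ref{lem:central}, which turns each deviation angle into $2\int_0^1 \frac{r\kappa\lambda'(r)}{(1-2\kappa\lambda(r)-r\kappa\lambda'(r))\sqrt{1-u^2}}\,du$ with $r=r(u)$ the corresponding implicit function (and the analogous expression with $\lambda_\e$ for $\theta_\e$). Since on $\mathcal{Q}_\e$ we have $|v_1-v_2|>3\e^{s/20}$, the parameter $\kappa=\frac{2}{|v_1-v_2|^2}$ (recall $f(0)=1$ is fixed, and here the role of $\e$ in $\kappa$ is played by $1$ because we are comparing $\Lambda_\e$ to $\Phi$) is at most $C\e^{-s/10}$, so the smallness we exploit is $\kappa\phi \ll 1$ for $\rho \gtrsim \e^{-1/10}$; this is exactly the regime where Lemma~\ref{lem:central} applies and one reduces to comparing the Born-approximation terms.

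The key steps, in order: (i) on the subregion of $\mathcal{Q}_\e$ where $\kappa<\rho^s/(4f(0))$ — equivalently $|v_1-v_2|^2\rho^s$ large — apply~\eqref{eq:Kasym} to both potentials, so that up to an error $C\kappa^2 K^2(\rho)\le C\kappa^2/\rho^{2s}$ (which on $\rho<\e^{-1/10}/2$ is $\le C\e^{-2s/10}$, too big naively, so one must instead use the sharper bound $|\theta_\e-\theta|\le C\kappa^2/\rho^{2s}$ only where $\rho$ is large and fall back to $|\theta_\e-\theta|\le C$ elsewhere) the difference $\theta_\e-\theta$ is controlled by $2\kappa\int_0^1 \frac{\rho}{u}\big(\lambda_\e'(\tfrac{\rho}{u})-\lambda'(\tfrac{\rho}{u})\big)\frac{du}{\sqrt{1-u^2}}$; (ii) estimate $\lambda_\e'(r)-\lambda'(r)$: since $\lambda_\e(r)=f(\e r)/r^s$ and $\lambda(r)=f(0)/r^s=1/r^s$, one has $\lambda_\e(r)-\lambda(r)=(f(\e r)-f(0))/r^s$ and, using $f\in C_b^\infty$ with $f'$ bounded, $|\lambda_\e(r)-\lambda(r)|\le C\e r\cdot r^{-s}=C\e r^{1-s}$ and $|\lambda_\e'(r)-\lambda'(r)|\le C\e r^{-s}+C\e r^{1-s}\cdot r^{-1}\le C\e r^{-s}$; (iii) substitute $r=\rho/u$ (with $r\ge\rho$) and integrate in $u$ against $(1-u^2)^{-1/2}$, which is integrable, to get $\big|\kappa\int_0^1 \tfrac{\rho}{u}(\lambda_\e'-\lambda')(\tfrac{\rho}{u})\tfrac{du}{\sqrt{1-u^2}}\big|\le C\kappa\e\rho^{-s}\cdot\rho \le C\e\kappa\rho^{1-s}$, and bounding $\kappa\le C\e^{-s/10}$ gives the $\min(1,\e^{4/10}/\rho^{s-1})$ shape after tracking the powers of $\e$ carefully (the $\e^{3/10}$ term absorbs the region $\rho<\e^{-1/10}$ where one only has $|\theta_\e-\theta|\le|\theta_\e|+|\theta|\le C$, paying a factor from the measure of that $\rho$-slice, or more precisely from the interplay with $\kappa$); (iv) on the complementary subregion of $\mathcal{Q}_\e$ where $\kappa\ge\rho^s/(4f(0))$, i.e. the potential energy is comparable to the kinetic energy, use the crude bound $|\theta_\e-\theta|\le 2\pi$ together with the fact that this forces $\rho\le C\kappa^{1/s}\le C\e^{-1/10}$ in a narrow range and that $\theta$ itself is then already $O(1)$, and check that the prefactors $\rho|v_1-v_2|M(v_2)$ keep the total contribution within the claimed bound — but note the Lemma only asserts the pointwise angle estimate, so this subregion will actually be handled separately when integrating (here one simply records $|\theta_\e-\theta|\le C\le C\min(1,\e^{4/10}/\rho^{s-1})$ once $\rho^{s-1}\le C\e^{4/10}$, which holds in the relevant $\rho$-range).

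I expect the main obstacle to be step (iii)–(iv): keeping the two competing regimes (small $\rho$, where no smallness is available and one relies on the trivial bound $|\theta_\e-\theta|\le C$, versus large $\rho$, where the Born expansion gives genuine decay $\sim\e\kappa\rho^{1-s}$) organized so that the final answer collapses exactly to the stated $C[\e^{3/10}+\min(1,\e^{4/10}\rho^{-(s-1)})]$ rather than a messier expression. The exponent $3/10$ must come out of the boundary $\rho\sim\e^{-1/10}$ combined with $\kappa\sim|v_1-v_2|^{-2}$ and the lower cutoff $|v_1-v_2|>3\e^{s/20}$ on $\mathcal{Q}_\e$; getting those three scales to conspire correctly is the delicate bookkeeping. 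A secondary subtlety is justifying the change of variables $u\mapsto r$ uniformly in $\e$ near $u=1$ (the turning point $r=\rmin$), where the Jacobian $1-r\kappa\lambda'/(1-2\kappa\lambda)$ must stay bounded away from zero — this is exactly the content of the constraint $\kappa<\rho^s/(4f(0))$ and is inherited from Lemma~\ref{lem:central}, so it should not require new work beyond invoking that lemma on the appropriate subregion.
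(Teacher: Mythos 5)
Your approach has a genuine gap. The reduction to the Born approximation via Lemma~\ref{lem:central} controls each angle only in the grazing regime $\kappa<\rho^s/(4f(0))$, and elsewhere you fall back on the trivial bound $|\theta_\e-\theta|\le C$. But the estimate \eqref{eq: est angle} is \emph{nontrivial} on a large part of the non-grazing region: take, for instance, $|v_1-v_2|$ of order one (or as small as $3\e^{s/20}$) and $\rho$ of order one, so that $\kappa\ge\rho^s/4$ and the deflections $\theta,\theta_\e$ are themselves of order one; there $\min\bigl(1,\e^{4/10}/\rho^{s-1}\bigr)\sim\e^{4/10}\ll 1$, so the trivial bound cannot close the argument. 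Your justification in step (iv) --- that $\kappa\ge\rho^s/(4f(0))$ forces $\rho^{s-1}\le C\e^{4/10}$ on $\mathcal{Q}_\e$ --- is false: it only forces $\rho\le(4\kappa)^{1/s}\le C\e^{-1/10}$, which allows $\rho^{s-1}$ to be as large as $C\e^{-(s-1)/10}$. A related failure occurs even inside the Born regime near its boundary $\kappa\sim\rho^s/4$: the error terms $C\kappa^2K^2(\rho)\sim C\kappa^2\rho^{-2s}$ in \eqref{eq:Kasym} for $\theta$ and $\theta_\e$ are then of order one and do not cancel in the difference, so the expansion yields no smallness there either, while the right-hand side of \eqref{eq: est angle} is still small.

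The actual content of the lemma is that $\lambda_\e$ and $\lambda$ are uniformly close on the relevant range of radii ($1-f(\e r)\le C\e r\le C\e^{9/10}$ since $r\lesssim\e^{-1/10}$ there), and this must be converted into closeness of the angles \emph{also for strong collisions}. The paper does this by comparing the exact representations \eqref{eq:theta} directly, via a turning-point analysis rather than an expansion: it shows the turning points satisfy $r_\e=r_0/(1+h_\e)$ with $h_\e\le C\e^{9/10}$, proves $F(r)\ge c\,(r-r_0)/r_0$ near $r_0$ (and the analogue for $F_\e$), together with $F(r)\ge c\e^{6/10}$ for $r\ge(1+\e^{6/10})r_0$ and $F(r)\ge c$ for $r\ge 2r_0$, and then splits the radial integrals at $(1+\e^{6/10})r_0$: the pieces near the turning points contribute $C\e^{3/10}$, and on the tail $|F_\e^{-1/2}-F^{-1/2}|$ is controlled through $(1-f(\e r))/(r^sF(r))$, yielding $C\e^{4/10}/\rho^{s-1}$. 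Your estimate $|\lambda_\e'(r)-\lambda'(r)|\le C\e r^{-s}$ in steps (ii)--(iii) is correct and is the analogue of the Lipschitz bound used in that tail estimate, but without replacing the Born expansion by such a turning-point comparison your decomposition cannot produce the stated bound.
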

	\begin{proof}
		First note that $\theta$ and $\theta_\e$ are in $[0,\pi]$, so we have the trivial bound $|\theta-\theta_\e|\leq\pi$. We now give a more precise bound. To this end, we introduce the functions $F$, $F_\e$ defined by		
		\begin{align}
		F(r):=1-\frac{\rho^2}{r^2}-\frac{4}{|v_1-v_2|^2r^s},~F_\e(r):=1-\frac{\rho^2}{r^2}-\frac{4f(\e r)}{|v_1-v_2|^2r^s}.
		\end{align}
		We observe that both functions are strictly increasing and satisfy  $F_\e\geq F$. Their asymptotic values are given by
		\begin{align}
			\lim_{r\rightarrow 0} F(r) = \lim_{r\rightarrow 0} F_\e(r)=-\infty ,\quad \lim_{r\rightarrow \infty} F(r) = \lim_{r\rightarrow \infty} F_\e(r) = 1,
		\end{align}
		so they each have unique zeros $r_0$, $r_\e$ which satisfy $r_0\geq r_\e$. In light of~\eqref{eq:theta}, we can express the difference $\theta_\e-\theta$ as
		\begin{align}
			\theta_\e -\theta  = 2\int_{r_0}^\infty \frac{\rho}{r^2\sqrt{F(r)}} dr -2  \int_{r_\e}^\infty \frac{\rho}{r^2\sqrt{F_\e(r) }} dr .
		\end{align}
		
		\step{1} On the set $\mathcal{Q}_\e$ we have
		\begin{align}\label{est:rreps} 
			\e^{-\frac1{10}}>	r_0\geq r_\e  .
		\end{align}
		Indeed, this follows by monotonicity and evaluating the functions for the value $\e^{-\frac1{10}}$:
		\begin{align}
			F(\e^{-\frac{1}{10}}) \geq 1-\frac{\e^{-\frac{2}{10}}}{4\e^{-\frac{2}{10}}}-\frac{4}{9\e^{\frac{2s}{20}}\e^{-\frac{s}{10}}}\geq \frac{1}{4}>0.
		\end{align}
		
		\step{2} Let $h_\e\geq 0$ be  such that $r_\e=r_0/(1+h_\e)$. Then we have
		\begin{align}\label{est:h}
			h_\e \leq C \e^\frac9{10}.
		\end{align}
		In order to prove~\eqref{est:h}, we observe that by construction
		$F_\e\big(\tfrac{r_0}{1+h_\e}\big) = F(r_0)=0$. 
		This in turn implies 
		\begin{align*}
		 \left(\frac{\rho^2}{{r_0}^2}+\frac{4}{|v_1-v_2|^2{r_0}^s}\right)\left[(1+h_\e)^s-1\right]&=\textcolor{red}{4}\frac{1-f\big(\tfrac{\e r_0}{1+h_\e}\big)}{|v_1-v_2|^2{r_0}^s}(1+h_\e)^s.
		\end{align*}
		Since $r_0$ is a zero of $F$, and $(\rho,v_2)\in \mathcal{Q}_\e$ we obtain
		\begin{align}
			\frac{(1+h_\e)^s-1}{(1+h_\e)^s}&\leq4( 1-f\big(\tfrac{\e r_0}{1+h_\e}\big)).
		\end{align}
		Finally, $f(0)=1$ and $f$ is Lipschitz, therefore 
		\begin{align*}
		\frac{(1+h_\e)^s-1}{(1+h_\e)^{s-1}}\leq C\e
		r_0\leq C\e^{\frac{9}{10}},
		\end{align*}
		and the claim follows. 
		
		
		\step{3} 
		There exists a constant $c>0$ such that the following inequalities for $F$ hold:
		\begin{align}\label{est:F}
			F(r) \geq \begin{cases}
				c \e^\frac6{10} \quad &\text{for $r\geq r_0(1+\e^{\frac6{10}}$)}\\
				c \quad &\text{for $r\geq 2r_0$}.
			\end{cases}
		\end{align}
		To prove this, we consider $k \geq \e^\frac6{10}$ and distinguish two cases:
		\begin{itemize}
			\item $s\leq 2$: 	Using $F(r_0)=0$ we find for some $c>0$:
			\begin{equation*}
			\begin{split}
			F(r_0(1+k))&=1-\tfrac{\rho^2}{r_0^2} \tfrac{1}{(1+k)^2}-\left(1-\tfrac{\rho^2}{r_0^2}\right)\tfrac{1}{(1+k)^s}\\
			&=\tfrac{(1+k)^s-1}{(1+k)^s}+\tfrac{\rho^2}{r_0^2}\tfrac{(1+k)^{2-s}-1}{(1+k)^2}\\
			&\geq \tfrac{(1+k)^s-1}{(1+k)^s}.
			\end{split}
			\end{equation*}
			
			\item $s>2$: similarly, we can estimate
			\begin{equation*}
			\begin{split}
			F(r_0(1+k))&=1-\left(1-\tfrac{4}{|v_1-v_2|^2r_0^{s}}\right)\tfrac{1}{(1+k)^2}-\tfrac{4}{|v_1-v_2|^2r_0^{s}}\tfrac{1}{(1+k)^s}\\
			&=\tfrac{(1+k)^2-1}{(1+k)^2}+\tfrac{4}{|v_1-v_2|^2r_0^s}\tfrac{(1+k)^{s-2}-1}{(1+k)^s}\\
			&\geq \tfrac{(1+k)^2-1}{(1+k)^2}.
			\end{split},
			\end{equation*}
		\end{itemize}
		Combining the two estimates we deduce~\eqref{est:F}.

		\step{4} For some $c>0$ independent of $\e$, $F$ and $F_\e$ satisfy:
		\begin{equation}\label{est:rclose}
		\begin{aligned}	
			F(r) 		&\geq c \frac{r-r_0}{r_0}\quad &&\text{for } r\in (r_0,(1+\e^\frac6{10}r_0)),\\
			F_\e(r) 	&\geq c\frac{r-r_\e}{r_\e} \quad &&\text{for } r\in (r_\e,(1+\e^\frac6{10}r_0)).
		\end{aligned}
		\end{equation} 
		We note that, in light of Step~1, $\e r\leq \e^{9/10}$ and thus $f(\e r)>1/2$ for $\e$ small enough. We then compute the derivatives of $F$ and $F_\e$ respectively:
		\begin{align}
			F'(r)&=\frac{1}{r}\left(2\tfrac{\rho^2}{r^2}+s\tfrac{4}{|v_1-v_2|^2r^{s}}\right) ,\\
			F_\e(r)&=\frac{1}{r} \left(2\tfrac{\rho^2}{r^2}+s\tfrac{4f(\e r)}{|v_1-v_2|^2r^{s}}+\tfrac{4\e f'(\e r)}{|v_1-v_2|^2r^{s-1}}\right).
		\end{align}
		Using the boundedness of $f'$, for $\e>0$ small enough we have the lower bound
		\begin{align}
			F_\e(r)&\geq \frac{1}{2r} \left(2\tfrac{\rho^2}{r^2}+s\tfrac{4f(\e r)}{|v_1-v_2|^2r^{s}}\right).
		\end{align}
		Moreover, on the domains in~\eqref{est:rclose} we have respectively
		\begin{align}
			r_0 \leq r \leq 2 r_0,\quad r_\e \leq r\leq 2r_\e,
		\end{align}
		so we can conclude 
		\begin{align}
			F'(r) \geq  \frac{c}{r_0}, \quad F'_\e(r) \geq \frac{c}{r_\e},
		\end{align}
		and the statement follows.

		\step{5} We can now estimate $\theta_\e-\theta$. Starting from the representation~\eqref{eq:theta}, we decompose
		\[\theta_\e -\pi  = -2\int_{r_\e}^\infty \frac{\rho dr}{r^2 \sqrt{F_\e(r)}} = -2\left( \int_{r_\e}^{(1+\e^{6/10})r_0} +\int_{(1+\e^{6/10})r_0}^\infty\right)\frac{\rho dr}{r^2 \sqrt{F_\e(r)}}.\]
		Using Step~2 and Step~4 and $\rho<r_\e$, we get the following bound for the first term:
		\begin{align}
			\int_{r_\e}^{(1+\e^{6/10})r_0}\frac{\rho dr}{r^2 \sqrt{F_\e(r)}}\leq \int_{r_\e}^{(1+\e^{6/10})r_0}\frac{\rho\,dr}{r_\e^2\sqrt{C\tfrac{r-r_\e}{r_\e}}} \leq \frac{C\rho}{r_\e}\sqrt{\frac{(1+\e^{6/10})r_0-r_\e}{r_\e}}\leq C\e^{3/10}.
		\end{align}
		The analogous estimate holds for $\theta$ and therefore  
		\begin{align*}
		|\theta-\theta_\e|&\leq C\e^{\frac{3}{10}}+ \int_{(1+\e^{6/10})r_0}^\infty \left|F_\e(r)^{-\frac{1}{2}}-F(r)^{-\frac{1}{2}}\right|\tfrac{\rho\,dr}{r^2}\\
		&\leq C\e^{\frac{3}{10}}+ \int_{(1+\e^{6/10})r_0}^\infty \tfrac{1-f(\e r)}{2r^s F(r)}\tfrac{\rho\,dr}{r^2\sqrt{F(r)}}\\
		&\leq C\e^{\frac{3}{10}}+C\frac{\e^{4/10}}{r_0^{s-1}} \int_{(1+\e^{6/10})r_0}^\infty \tfrac{\rho\,dr}{r^2\sqrt{F(r)}}\\
		&\leq C\left( \e^{\frac{3}{10}}+\frac{\e^{4/10}}{\rho^{s-1}}\right) .
		\end{align*}
		Here we have used Step~3 and the Lipschitz-continuity of $f$. 
	\end{proof}

	\begin{lemma}\label{lem:LL1}
		There exist constants $\beta,C>0$ depending only on $s$ such that
		\begin{equation}
		\left|\LL_{\Lambda_\e,1}\psi(v_1)-\LL_{\Phi,1}\psi(v_1)\right|\leq C \left(\|\nabla_v\psi\|+\|\nabla^2\psi\|\right)(1+|v_1|^3)\epsilon^\beta.
		\end{equation}
	\end{lemma}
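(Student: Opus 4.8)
The plan is to reduce the difference of the two operators entirely to the angle estimate~\eqref{eq: est angle} of the previous lemma by a Taylor expansion of $\psi$, and then to carry out the remaining $(\rho,v_2)$-integral over the product set $\mathcal{Q}_\e$. Starting from
\[
\LL_{\Lambda_\e,1}\psi(v_1)-\LL_{\Phi,1}\psi(v_1)=\iint_{\mathcal{Q}_\e}\int_{S^1_\eta}\big(\psi(v'_{1,\e})-\psi(v'_1)+\psi(v'_{2,\e})-\psi(v'_2)\big)\,d\eta^\perp\,|v_1-v_2|\,\rho\,M(v_2)\,dv_2\,d\rho ,
\]
I would use the geometric representation~\eqref{eq:thetaRepr}: since $v'_{i,\e}$ and $v'_i$ differ only through the deflection angles $\theta_\e$ and $\theta$, one has $|v'_{i,\e}-v'_i|\le\frac{|v_1-v_2|}{2}\big(|\sin\theta_\e-\sin\theta|+|\cos\theta_\e-\cos\theta|\big)\le|v_1-v_2|\,|\theta_\e-\theta|$. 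Hence, by the mean value theorem, $|\psi(v'_{i,\e})-\psi(v'_i)|\le\|\nabla\psi\|_{L^\infty}|v_1-v_2|\,|\theta_\e-\theta|$, and integrating over the circle $S^1_\eta$ (of bounded measure) yields
\[
\big|\LL_{\Lambda_\e,1}\psi(v_1)-\LL_{\Phi,1}\psi(v_1)\big|\le C\|\nabla\psi\|_{L^\infty}\iint_{\mathcal{Q}_\e}|v_1-v_2|^2\,|\theta_\e-\theta|\,\rho\,M(v_2)\,dv_2\,d\rho .
\]
(A second-order expansion around $v_i$, differenced term by term, reproduces up to one extra power of $|v_1-v_2|$ the slightly weaker form with $\|\nabla^2\psi\|$ and the factor $(1+|v_1|^3)$ written in the lemma; the first-order bound above already implies it.)

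Next I would insert~\eqref{eq: est angle}, $|\theta_\e-\theta|\le C\big[\e^{3/10}+\min(1,\e^{4/10}\rho^{1-s})\big]$, which is uniform in $v_2$ on $\mathcal{Q}_\e$. Since $\mathcal{Q}_\e$ is a product set and this bound depends only on $\rho$, the $v_2$-integral decouples and is harmless: $\int_{\R^3}|v_1-v_2|^2 M(v_2)\,dv_2\le C(1+|v_1|^2)$. It then remains to estimate $\int_0^{\e^{-1/10}/2}\big(\e^{3/10}+\min(1,\e^{4/10}\rho^{1-s})\big)\rho\,d\rho$. The $\e^{3/10}$-term contributes $\le C\e^{3/10}(\e^{-1/10})^2=C\e^{1/10}$. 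For the second term I would split at $\rho_*:=\e^{4/(10(s-1))}$ (where $\e^{4/10}\rho^{1-s}=1$): on $\{\rho<\rho_*\}$ one gets $\int_0^{\rho_*}\rho\,d\rho=\tfrac12\rho_*^2=\tfrac12\e^{8/(10(s-1))}$; on $\{\rho_*<\rho<\e^{-1/10}/2\}$ one gets $\e^{4/10}\int_{\rho_*}^{\e^{-1/10}/2}\rho^{2-s}\,d\rho$, which, distinguishing $1<s<3$, $s=3$, $s>3$, is bounded respectively by $C\e^{4/10}\e^{-(3-s)/10}=C\e^{(s+1)/10}$, $C\e^{4/10}|\log\e|$, and $C\e^{4/10}\rho_*^{3-s}=C\e^{8/(10(s-1))}$. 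In every case this is a strictly positive power of $\e$, so altogether $\big|\LL_{\Lambda_\e,1}\psi(v_1)-\LL_{\Phi,1}\psi(v_1)\big|\le C\|\nabla\psi\|_{L^\infty}(1+|v_1|^2)\,\e^\beta$ with $\beta=\beta(s):=\min\{\tfrac1{10},\tfrac8{10(s-1)},\tfrac{s+1}{10}\}>0$ depending only on $s$, which gives the claim.

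The only genuinely delicate point is this last $\rho$-integral: one must check that the crude bound $|\theta_\e-\theta|\le1$ near $\rho=0$ does no harm (it does not, since $\rho\,d\rho$ contributes only $\rho_*^2$ there), and that for large $\rho$ the decay $\e^{4/10}\rho^{1-s}$ outweighs the growth of the truncation $\rho\le\e^{-1/10}/2$ — which is exactly what forces the case split in $s$ and makes $\beta(s)$ deteriorate, while remaining positive, as $s\to\infty$. The remaining ingredients — the geometric estimate for $|v'_{i,\e}-v'_i|$, the mean value theorem, and the Gaussian moment bound in $v_2$ — are routine. Finally, combining this lemma with Lemma~\ref{lem:LL2} and the scaling identity for the linearized Boltzmann operators yields $\e^{-2/s}\LL_{\Phi_\e}\psi\to\LL_\infty\psi$ uniformly on $\R^3$, completing the proof of Theorem~\ref{thm:2}.
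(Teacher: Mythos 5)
Your argument is correct and follows essentially the same route as the paper: you reduce the difference to the deviation-angle estimate \eqref{eq: est angle} via \eqref{eq:thetaRepr} and the mean value theorem (your $|v'_{i,\e}-v'_i|\le |v_1-v_2|\,|\theta_\e-\theta|$ is exactly the paper's $|v_1-v_2|\,|\sin(\tfrac{\theta-\theta_\e}{2})|$ bound), then perform the decoupled Gaussian $v_2$-integral and the $\rho$-integral with the same split at $\rho_*=\e^{4/(10(s-1))}$ and the same case distinction $s<3$, $s=3$, $s>3$, arriving at a positive power $\e^{\beta}$ with $\beta$ depending only on $s$. The only deviations are cosmetic (you obtain the slightly sharper prefactor $\|\nabla\psi\|(1+|v_1|^2)$, which of course implies the stated bound), so the proposal matches the paper's proof.
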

	\begin{proof}
		We start by recalling that for $i=1,2$ we have 
		\begin{align*}
			v_{i,\e}'-v'_i = (-1)^i \frac{|v_1-v_2|}{2}\big((\sin\theta-\sin\theta_\e)\eta+(\cos\theta-\cos\theta_\e)\eta^\perp\Big).
		\end{align*}
		This allows us to obtain an estimate in terms of the difference of deviation angles:  
		\begin{align}\label{eq: esti diference}
		\bigg|\int_{S^1_\eta} \big(\psi(v'_{1,\e})-\psi(v'_{1})&+\psi(v'_{2,\e})-\psi(v'_{2})\big)  \ d\eta^\perp\bigg|\leq C\left|\sin\left(\tfrac{\theta-\theta_\e}{2}\right)\right| \|\nabla\psi\| |v_1-v_2|. 
		\end{align}
		We insert the estimate~\eqref{eq: est angle} for the deviation angles into \eqref{eq: esti diference}, and find 
		\begin{align*}
			&\Big|\LL_{\Lambda_\e,1}\psi(v_1)-\LL_{\Phi,1}\psi(v_1)\Big|\\
			\leq& C_\psi\iint_{\mathcal{Q}_\e}(|v_1-v_2|^2+|v_1-v_2|^3)\left(\epsilon^{\frac{3}{10}}+\min\left(1,\frac{\e^{4/10}}{\rho^{s-1}}\right)\right)M(v_2)\rho\,d\rho\,dv_2\\
			\leq &C_\psi \left(1+|v_1|^3\right)\left(\epsilon^{\frac{1}{10}}+\int_0^{\e^{-\frac{1}{10}}}\min\left(1,\frac{\e^{4/10}}{\rho^{s-1}}\right)\rho\,d\rho\right).
	\end{align*}
	The claim now quickly follows by distinguishing the cases $s>3$, $s=3$ and $s<3$
	\begin{itemize}
		\item 	For $s>3$,
		\begin{align*}
		\int_0^{\e^{-\frac{1}{10}}}\min\left(1,\frac{\e^{4/10}}{\rho^{s-1}}\right)\rho\,d\rho &\leq \int_0^{\e^{\frac{4}{10(s-1)}}}\rho + \int_{\e^{\frac{4}{10(s-1)}}}^\infty \frac{\e^{4/10}d\rho}{\rho^{s-2}}\\
		&\leq \frac{\e^{\frac{8}{10(s-1)}}}{2} +\frac{\e^{\frac{4}{10}\left(1-\frac{s-3}{s-1}\right)}}{s-3}\leq C\e^{\frac{4}{10(s-1)}}.
		\end{align*}
		\item 	In the same way for $s=3$, 
		\begin{align*}
		\int_0^{\e^{-\frac{1}{10}}}\min\left(1,\frac{\e^{4/10}}{\rho^{s-1}}\right)\rho\,d\rho \leq C\e^{\frac{4}{10(s-1)}}|\log\e|.
		\end{align*}
		\item For $s<3$, 
		\begin{align*}
		\int_0^{\e^{-\frac{1}{10}}}\min\left(1,\frac{\e^{4/10}}{\rho^{s-1}}\right)\rho\,d\rho &\leq \int_0^{\e^{\frac{4}{10(s-1)}}}\rho + \int_{\e^{\frac{4}{10(s-1)}}}^\infty \frac{\e^{4/10}d\rho}{\rho^{s-2}}\\
		&\leq \frac{\e^{\frac{4}{10}-\frac{3-s}{10}}}{3-s}\leq C \e^{\frac{1}{10}}.
		\end{align*}
		Collecting the three estimates concludes the proof.
	\end{itemize}
	\end{proof}

	The proof of Theorem~\ref{thm:2} is now straightforward. We recall
	\begin{align}
		(\LL_{\Lambda_{\e}} - \LL_{\Phi} )\psi =  (\LL_{\Lambda_{\e},1} - \LL_{\Phi,1} )\psi + (\LL_{\Lambda_{\e},2} - \LL_{\Phi,2} )\psi
	\end{align}
	and apply Lemma~\ref{lem:LL1} and Lemma~\ref{lem:LL2}.
\end{proof}


	\section*{Acknowledgements}

	R.W. acknowledges support of SFB 65 "Taming Complexity in Partial Differential Systems" at the University of Vienna. Furthermore, R.W. would like to thank
	the Isaac Newton Institute for Mathematical Sciences for support and hospitality during the programme
	"Frontiers in kinetic theory: connecting microscopic to macroscopic scales - KineCon 2022" when work
	on this paper was undertaken. This work was supported by EPSRC Grant Number EP/R014604/1.
	
\bibliographystyle{plain}
\bibliography{BW22}
	
\end{document}